\theoremstyle{plain}
\newtheorem{theorem}{Theorem}
\newtheorem{lemma}{Lemma}
\newtheorem{definition}{Definition}
\theoremstyle{definition}
\newtheorem{remark}{Remark}
\newcommand{\nonprint}[1]{}
\begin{document}

\begin{flushleft}

\small
DOI 10.1007/s11253-019-01599-7

\textit{Ukrainian Mathematical Journal, Vol.70, No.11, April, 2019 (Ukrainian Original Vol.70, No.11, November, 2018)}
\vspace{+0.2cm}

\textbf{O.\,M.~Atlasiuk, V.\,A.~Mikhailets} \small(Institute of Mathematics of NAS of Ukraine, Kyiv)

\Large

\textbf{Fredholm one-dimensional boundary-value problems \\with parameter in Sobolev spaces}
\end{flushleft}

\normalsize

\begin{abstract}
For systems of linear differential equations on a compact interval, we investigate the~dependence on a parameter $\varepsilon$ of the solutions to boundary-value problems in the Sobolev spaces $W^{n}_{\infty}$. We~obtain a constructive criterion of the continuous dependence of the solutions of these problems on the~parameter $\varepsilon$ for $\varepsilon=0$. The degree of convergence of these solutions is established.
\end{abstract}

\section{Introduction}\label{section1}

The investigation of solutions of the systems of ordinary differential equations is an important part of numerous problems of contemporary analysis and its applications (see, e.g.,~\cite{BochSAM2004} and the references therein). For linear boundary-value problems, the conditions for the Fredholm property and continuous dependence of solutions on the parameters were established by Kiguradze \cite{Kigyradze1975, Kigyradze1987}. Later, his results were generalized by the second author of the present paper and his colleagues \cite{KodliukMikhailets2013JMS, MPR2018, MikhailetsChekhanova}. Recently, these investigations were extended to more general classes of Fredholm boundary-value problems in various Banach function spaces \cite{GKM2015, KodlyukM2013, GKM2017, MMS2016, AtlMikh2018}. These problems have a series of specific features and require the application of new approaches and methods.

\section{Statement of the problem}\label{section2}

 Consider a finite interval $(a,b)\subset\mathbb{R}$ and given numbers
$$
\{m, n\} \subset \mathbb{N}, \quad \varepsilon_0>0.
$$
  We study a family of inhomogeneous boundary-value problems of the form
 \begin{equation}\label{equation_e}
 L(\varepsilon)y(t;\varepsilon):= y'(t;\varepsilon) + A(t;\varepsilon) y(t;\varepsilon) = f(t;\varepsilon), \quad t\in (a,b),
\end{equation}
\begin{equation}\label{bound_cond_e}
B(\varepsilon)y(\cdot;\varepsilon) = c(\varepsilon),
\end{equation}
 parametrized by a number $\varepsilon \in [0,\varepsilon_0)$. Here, for any fixed value of the parameter $\varepsilon$, the matrix function $$A(\cdot;\varepsilon)  \in W_\infty^{n-1}\bigl([a,b];\mathbb{C}^{m\times m}\bigr)=:\bigl(W^{n-1}_\infty\bigr)^{m\times m},$$ the vector function $$f(\cdot;\varepsilon) \in W_\infty^{n-1}\bigl([a,b];\mathbb{C}^{m}\bigr)=:\bigl(W^{n-1}_\infty\bigr)^m,$$ the vector $c(\varepsilon) \in \mathbb{C}^m$, and $B(\varepsilon)$ is a linear continuous operator
$$
B(\varepsilon) \colon \bigl(W^{n}_\infty \bigr)^m\rightarrow \mathbb{C}^m.
$$

A solution of the boundary-value problem \eqref{equation_e}, \eqref{bound_cond_e} is defined as a vector function $y(\cdot;\varepsilon) \in (W^{n}_\infty)^m$ satisfying equation \eqref{equation_e}
almost everywhere on $(a,b)$ (everywhere for $n\geq 1$) and equality~\eqref{bound_cond_e}. The boundary condition \eqref{bound_cond_e} is the most general condition for system \eqref{equation_e} whose solution runs over the entire Sobolev space $(W^{n}_\infty)^m$ \cite[Lemma 1]{AtlMikh2018}. The boundary-value problem \eqref{equation_e}, \eqref{bound_cond_e} can be associated with the linear operator
\begin{equation}\label{opLB}
(L(\varepsilon),B(\varepsilon)) \colon \bigl(W^{n}_\infty\big)^m\to \bigr(W^{n-1}_\infty\big)^m\times\mathbb{C}^{m}.
\end{equation}
This is a Fredholm operator with index zero \cite[Theorem 1]{AtlMikh2018}.

The main aim of the present paper is to establish a criterion for the continuous dependence of the solutions of boundary-value problems of the form \eqref{equation_e}, \eqref{bound_cond_e} on the parameter $\varepsilon$ for $\varepsilon=0$.

\section{Main results}\label{section3}

We now formulate the main results of the present paper. Their proof is presented in Section~4.

In order that the analyzed problem be meaningful, in what follows, we assume than \textbf{condition (0)} is satisfied, namely, a \it  boundary-value problem of the form \eqref{equation_e}, \eqref{bound_cond_e} {\samepage
\begin{equation*}
L(0)y(t;0)=0,\quad t\in (a,b),\quad B(0)y(\cdot;0)=0
\end{equation*}
possesses solely the trivial solution.}\rm

In this case, the corresponding limiting inhomogeneous boundary-value problem possesses a unique solution.

Consider the following \textbf{boundary conditions} as $\varepsilon\to0+$:
\begin{itemize}
  \item [(I)] $A(\cdot;\varepsilon)\to A(\cdot;0)$ in the space $\big(W^{n-1}_\infty\big)^{m\times m}$;
  \item [(II)] $B(\varepsilon)y\to B(0)y$ in $\mathbb{C}^{m}$ for any $y\in\big(W^{n}_\infty\big)^m$.
\end{itemize}

\begin{definition}\label{defin_3} We say that a solution of the boundary-value problem \eqref{equation_e}, \eqref{bound_cond_e} continuously depends on the parameter $\varepsilon$ for $\varepsilon=0$ if the following conditions are satisfied:
\begin{itemize}
\item[$(\ast)$] there exists a positive number $\varepsilon_{1}<\varepsilon_{0}$ such that, for any $\varepsilon\in[0,\varepsilon_{1})$, arbitrary right-hand sides $f(\cdot;\varepsilon)\in \big(W^{n-1}_\infty\big)^{m}$, and  $c(\varepsilon)\in\mathbb{C}^{m}$, this problem has a unique solution $y(\cdot;\varepsilon)$ that belongs to the space $\big(W^{n}_\infty\big)^{m}$;
\item [$(\ast\ast)$] the convergence of the right-hand sides $f(\cdot;\varepsilon)\to f(\cdot;0)$ in $\big(W_\infty^{n-1}\big)^{m}$ and $c(\varepsilon)\to c(0)$ in $\mathbb{C}^{m}$ implies the convergence of the solutions
\begin{equation*}\label{4.gu}
y(\cdot;\varepsilon)\to y(\cdot;0)\quad\mbox{in}\quad\big(W^{n}_\infty\big)^{m}
\quad\mbox{as}\quad\varepsilon\to0+.
\end{equation*}
\end{itemize}
\end{definition}

We now formulate a criterion for the continuity of the solution $ y = y(t,\varepsilon)$ of the boundary-value problem \eqref{equation_e},~\eqref{bound_cond_e} with respect to the parameter $\varepsilon$ as $\varepsilon\rightarrow 0+$ in the space $W^{n}_\infty$.

\begin{theorem}\label{3_5}
A solution of the boundary-value problem \eqref{equation_e}, \eqref{bound_cond_e} continuously depends on the parameter~$\varepsilon$ for $\varepsilon=0$ if and only if it satisfies condition \textup{(0)} and the boundary conditions~\textup{(I)} and \textup{(II)}.
\end{theorem}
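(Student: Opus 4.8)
The plan is to prove the two implications separately, using the fact from \cite[Theorem 1]{AtlMikh2018} that each operator $(L(\varepsilon),B(\varepsilon))$ is Fredholm with index zero, so that unique solvability is equivalent to triviality of the kernel.

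\smallskip

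\textbf{Sufficiency.} Assume (0), (I), (II). First I would establish $(\ast)$: since the limit operator $(L(0),B(0))$ is Fredholm of index zero with trivial kernel by (0), it is invertible, i.e.\ a topological isomorphism of $(W^n_\infty)^m$ onto $(W^{n-1}_\infty)^m\times\mathbb{C}^m$. The key step is to show that $(L(\varepsilon),B(\varepsilon))\to(L(0),B(0))$ in a sense strong enough to preserve invertibility for small $\varepsilon$. Here (I) gives $\|A(\cdot;\varepsilon)-A(\cdot;0)\|_{(W^{n-1}_\infty)^{m\times m}}\to0$, and since multiplication by a $W^{n-1}_\infty$ matrix is a bounded operator on $W^n_\infty$ (Sobolev algebra / Banach-algebra property of $W^{n-1}_\infty$ on a compact interval), this yields $\|L(\varepsilon)-L(0)\|\to0$ in the operator norm $(W^n_\infty)^m\to(W^{n-1}_\infty)^m$. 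Condition (II), however, only gives \emph{pointwise} convergence $B(\varepsilon)y\to B(0)y$, not norm convergence; so one cannot simply invoke the openness of the set of invertible operators. The natural fix is to write the solution operator explicitly: let $\mathcal{Y}(\cdot;\varepsilon)$ be the fundamental matrix of $y'+A(\cdot;\varepsilon)y=0$ normalized by $\mathcal{Y}(a;\varepsilon)=I$; then any solution of \eqref{equation_e} is $y(t;\varepsilon)=\mathcal{Y}(t;\varepsilon)\big(d+\int_a^t\mathcal{Y}^{-1}(s;\varepsilon)f(s;\varepsilon)\,ds\big)$ for a free vector $d\in\mathbb{C}^m$, and \eqref{bound_cond_e} becomes the finite linear system $B(\varepsilon)[\mathcal{Y}(\cdot;\varepsilon)]\,d=c(\varepsilon)-B(\varepsilon)\big[\mathcal{Y}(\cdot;\varepsilon)\int_a^{\cdot}\mathcal{Y}^{-1}f\big]$ in $\mathbb{C}^m$. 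One shows $\mathcal{Y}(\cdot;\varepsilon)\to\mathcal{Y}(\cdot;0)$ in $(W^n_\infty)^{m\times m}$ (continuous dependence of solutions of linear ODEs on the coefficient, upgraded to the $W^n_\infty$ norm via the equation itself and (I)); then (II) applied columnwise gives $B(\varepsilon)[\mathcal{Y}(\cdot;\varepsilon)]\to B(0)[\mathcal{Y}(\cdot;0)]$ in $\mathbb{C}^{m\times m}$ — here one must be slightly careful that (II) is a pointwise hypothesis but $\mathcal{Y}(\cdot;\varepsilon)$ is itself varying, so one combines $B(\varepsilon)[\mathcal{Y}(\cdot;0)]\to B(0)[\mathcal{Y}(\cdot;0)]$ (pointwise (II)) with $\|B(\varepsilon)\|$ locally bounded and $\mathcal{Y}(\cdot;\varepsilon)\to\mathcal{Y}(\cdot;0)$. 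By (0) the limit matrix $B(0)[\mathcal{Y}(\cdot;0)]$ is nonsingular, hence so is $B(\varepsilon)[\mathcal{Y}(\cdot;\varepsilon)]$ for $\varepsilon<\varepsilon_1$, which gives $(\ast)$. For $(\ast\ast)$, with $f(\cdot;\varepsilon)\to f(\cdot;0)$ in $(W^{n-1}_\infty)^m$ and $c(\varepsilon)\to c(0)$, the right-hand side of the finite system converges, the matrix converges to an invertible one, so $d(\varepsilon)\to d(0)$; plugging back into the variation-of-parameters formula and using the Banach-algebra estimate once more gives $y(\cdot;\varepsilon)\to y(\cdot;0)$ in $(W^n_\infty)^m$.

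\smallskip

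\textbf{Necessity.} Assume the solution depends continuously on $\varepsilon$ at $0$ in the sense of Definition \ref{defin_3}. Condition (0) is immediate: $(\ast)$ at $\varepsilon=0$ says the limit problem has a unique solution for all right-hand sides, in particular for $f=0$, $c=0$ the only solution is trivial. For (I): choose the right-hand sides so that the solution is prescribed. Concretely, fix any $z\in(W^n_\infty)^m$ and set $f(\cdot;\varepsilon):=z'+A(\cdot;\varepsilon)z$, $c(\varepsilon):=B(\varepsilon)z$; then $y(\cdot;\varepsilon)=z$ is the (unique) solution for every $\varepsilon$, so the convergence hypothesis is vacuous here and gives nothing — instead I would argue contrapositively. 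Suppose (I) fails; then along some sequence $\varepsilon_k\to0+$ one has $\|A(\cdot;\varepsilon_k)-A(\cdot;0)\|_{(W^{n-1}_\infty)^{m\times m}}\ge\delta>0$. Fix the solution-side: take $f(\cdot;\varepsilon)\equiv f(\cdot;0)$ and $c(\varepsilon)\equiv c(0)$ for a generic choice making $y(\cdot;0)$ nonzero; from $(\ast\ast)$ the solutions satisfy $y(\cdot;\varepsilon_k)\to y(\cdot;0)$ in $(W^n_\infty)^m$, hence $y'(\cdot;\varepsilon_k)\to y'(\cdot;0)$ in $(W^{n-1}_\infty)^m$, and therefore $A(\cdot;\varepsilon_k)y(\cdot;\varepsilon_k)=f(\cdot;0)-y'(\cdot;\varepsilon_k)\to f(\cdot;0)-y'(\cdot;0)=A(\cdot;0)y(\cdot;0)$ in $(W^{n-1}_\infty)^m$. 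Running this over sufficiently many independent choices of $(f(\cdot;0),c(\cdot;0))$ — using that the limit solution map is onto $(W^n_\infty)^m$, so the solutions $y(\cdot;0)$ can be taken to span, pointwise on a set of positive measure, all of $\mathbb{C}^m$, and in fact one can arrange the $y(t;0)$ to form an invertible matrix — forces $A(\cdot;\varepsilon_k)\to A(\cdot;0)$ in $(W^{n-1}_\infty)^{m\times m}$, contradicting $\|A(\cdot;\varepsilon_k)-A(\cdot;0)\|\ge\delta$. Thus (I) holds. Finally (II): for any $z\in(W^n_\infty)^m$ take $f(\cdot;\varepsilon):=z'+A(\cdot;\varepsilon)z$ and $c(\varepsilon):=c(0):=B(0)z$. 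By (I) already proved, $f(\cdot;\varepsilon)\to z'+A(\cdot;0)z=f(\cdot;0)$ in $(W^{n-1}_\infty)^m$, so $(\ast\ast)$ applies: the unique solution $y(\cdot;\varepsilon)$, which satisfies $B(\varepsilon)y(\cdot;\varepsilon)=c(0)$, converges to $y(\cdot;0)=z$ in $(W^n_\infty)^m$. Then $B(0)z-B(\varepsilon)z=B(0)z-B(\varepsilon)y(\cdot;\varepsilon)+B(\varepsilon)\big(y(\cdot;\varepsilon)-z\big)=\big(c(0)-c(0)\big)+B(\varepsilon)\big(y(\cdot;\varepsilon)-z\big)=B(\varepsilon)\big(y(\cdot;\varepsilon)-z\big)\to0$, provided $\{\|B(\varepsilon)\|\}$ is locally bounded near $0$; that local boundedness follows from $(\ast)$ together with uniform-boundedness-type reasoning on the now-invertible operators $(L(\varepsilon),B(\varepsilon))^{-1}$ (or can be extracted directly from the convergence of the finite systems above). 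Hence $B(\varepsilon)z\to B(0)z$, i.e.\ (II).

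\smallskip

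\textbf{Main obstacle.} The delicate point throughout is that (II) is only a pointwise (strong-operator) convergence hypothesis while the argument wants to move it past the $\varepsilon$-dependent fundamental matrices $\mathcal{Y}(\cdot;\varepsilon)$ and to conclude norm-type statements; the resolution is to isolate the finite-dimensional system $B(\varepsilon)[\mathcal{Y}(\cdot;\varepsilon)]d=\dots$ in $\mathbb{C}^m$, where strong and norm convergence coincide, and to secure a local bound on $\|B(\varepsilon)\|$ so that the substitutions $y(\cdot;\varepsilon)\rightsquigarrow\mathcal{Y}(\cdot;0)$ are justified. The other technical ingredient, used repeatedly, is that $W^{n-1}_\infty[a,b]$ is a Banach algebra under pointwise multiplication, which upgrades (I) into operator-norm convergence $L(\varepsilon)\to L(0)$ and controls the variation-of-parameters formula in the $W^n_\infty$ norm rather than merely in $C$ or $W^1_\infty$.
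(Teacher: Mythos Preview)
Your sufficiency argument is essentially the paper's own: reduce via the fundamental matrix $\mathcal{Y}(\cdot;\varepsilon)$ to the finite linear system $B(\varepsilon)[\mathcal{Y}(\cdot;\varepsilon)]\,d=\ldots$ in $\mathbb{C}^m$, use (I) to get $\mathcal{Y}(\cdot;\varepsilon)\to\mathcal{Y}(\cdot;0)$ in $(W^n_\infty)^{m\times m}$, and combine with (II) and Banach--Steinhaus to pass to the limit. Your necessity arguments for (0) and for (I) are also in the right direction; the paper makes the ``sufficiently many independent choices'' explicit by solving the matrix problem $Y'+A(\cdot;\varepsilon)Y=0$, $[B(\varepsilon)Y]=I_m$, checking that $\det Y(t;\varepsilon)\neq0$, and reading off $A(\cdot;\varepsilon)=-Y'(\cdot;\varepsilon)Y(\cdot;\varepsilon)^{-1}$, which is a cleaner execution of your sketch.

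The genuine gap is in your necessity argument for (II), at the point where you need $\|B(\varepsilon)\|$ locally bounded near $\varepsilon=0$. Neither of your suggested routes delivers this. Uniform boundedness of the \emph{inverse} operators $(L(\varepsilon),B(\varepsilon))^{-1}$ --- which does follow from $(\ast\ast)$ via Banach--Steinhaus --- yields only the lower bound $\|y\|_{n,\infty}\le C\bigl\|(L(\varepsilon)y,B(\varepsilon)y)\bigr\|$; there is no general principle converting bounded inverses into bounded direct operators. And the ``finite systems'' control $B(\varepsilon)$ only on the $m$-dimensional span of the columns of $\mathcal{Y}(\cdot;\varepsilon)$, not on all of the infinite-dimensional space $(W^n_\infty)^m$.

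The paper devotes a separate step to this bound, and the argument uses an idea absent from your proposal: compactness in the finite-dimensional target. Assume $\|B(\varepsilon^{(k)})\|\to\infty$ along some $\varepsilon^{(k)}\to0$; pick $x_k$ with $\|x_k\|_{n,\infty}=1$ and $\|B(\varepsilon^{(k)})x_k\|_{\mathbb{C}^m}\ge\tfrac12\|B(\varepsilon^{(k)})\|$. Set $y_k:=\|B(\varepsilon^{(k)})\|^{-1}x_k\to0$ in $(W^n_\infty)^m$, $f_k:=L(\varepsilon^{(k)})y_k\to0$ in $(W^{n-1}_\infty)^m$ (using (I), already established), and $c_k:=B(\varepsilon^{(k)})y_k$. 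Then $\tfrac12\le\|c_k\|_{\mathbb{C}^m}\le1$, so by local compactness of $\mathbb{C}^m$ a subsequence converges to some $c(0)\neq0$. Now $(\ast\ast)$ forces $y_k$ to converge to the unique solution of $L(0)y=0$, $B(0)y=c(0)$, which is nonzero --- contradicting $y_k\to0$. Once $\|B(\varepsilon)\|=O(1)$ is secured this way, the remainder of your argument for (II) goes through.
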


We proceed to the investigation of the rate of convergence of solutions to the boundary-value problem \eqref{equation_e}, \eqref{bound_cond_e} as $\varepsilon\to0+$.

We set
\begin{equation*}
\widetilde{d}_{n-1,\infty}(\varepsilon):=
\bigl\|L(\varepsilon)y(\cdot;0)-f(\cdot;\varepsilon)\bigr\|_{n-1,\infty}+
\bigl\|B(\varepsilon)y(\cdot;0)-c(\varepsilon)\bigr\|_{\mathbb{C}^{m}},
\end{equation*}
where $\|\cdot\|_{n-1,\infty}$ is the norm in the space $W^{n-1}_\infty$ and $\|\cdot\|_{\mathbb{C}^{m}}$ is the norm in the space ${\mathbb{C}^{m}}$.

The quantities
$$
\bigl\|y(\cdot;0)-y(\cdot;\varepsilon)\bigr\|_{n,\infty}
$$
and $\widetilde{d}_{n-1,\infty}(\varepsilon)$ are, respectively, the error and discrepancy of the solution $y(\cdot;\varepsilon)$ of the boundary-value problem \eqref{equation_e}, \eqref{bound_cond_e} if $y(\cdot;\varepsilon)$ is its exact solution and $y(\cdot;0)$ is an approximate solution of the~problem.

\begin{theorem}\label{3.6.th-bound}
Suppose that the boundary-value problem \eqref{equation_e}, \eqref{bound_cond_e} satisfies conditions \textup{(0)}, \textup{(I)}, and \textup{(II)}. Then there exist positive quantities $\varepsilon_{2}<\varepsilon_{1}$ and $\gamma_{1}$, $\gamma_{2}$ such that, for any $\varepsilon\in(0,\varepsilon_{2})$, the~following two-sided estimate is true:
\begin{equation}\label{3.6.bound}
\begin{aligned}
\gamma_{1}\,\widetilde{d}_{n-1,\infty}(\varepsilon)
\leq\bigl\|y(\cdot;0)-y(\cdot;\varepsilon)\bigr\|_{n,\infty}\leq
\gamma_{2}\,\widetilde{d}_{n-1,\infty}(\varepsilon),
\end{aligned}
\end{equation}
where the quantities $\varepsilon_{2}$, $\gamma_{1}$ and $\gamma_{2}$ are independent $y(\cdot;0)$ and $y(\cdot;\varepsilon)$.
\end{theorem}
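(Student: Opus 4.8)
The plan is to reduce the whole statement to the uniform boundedness of the operator \eqref{opLB} and of its inverse. Put $z(\cdot;\varepsilon):=y(\cdot;0)-y(\cdot;\varepsilon)$. Since $L(\varepsilon)$ and $B(\varepsilon)$ are linear and $y(\cdot;\varepsilon)$ is the exact solution of \eqref{equation_e}, \eqref{bound_cond_e}, we have $L(\varepsilon)z(\cdot;\varepsilon)=L(\varepsilon)y(\cdot;0)-f(\cdot;\varepsilon)$ and $B(\varepsilon)z(\cdot;\varepsilon)=B(\varepsilon)y(\cdot;0)-c(\varepsilon)$, so that
\begin{equation*}
\widetilde{d}_{n-1,\infty}(\varepsilon)=\bigl\|(L(\varepsilon),B(\varepsilon))z(\cdot;\varepsilon)\bigr\|_{(W^{n-1}_\infty)^m\times\mathbb{C}^m},
\end{equation*}
where the norm on the product space is the sum of the two component norms. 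Thus \eqref{3.6.bound} is precisely the assertion that, for $\varepsilon\in(0,\varepsilon_2)$, the operator \eqref{opLB} and its inverse have norms bounded uniformly in $\varepsilon$, and one may take $\gamma_1:=\bigl(\sup_{\varepsilon}\|(L(\varepsilon),B(\varepsilon))\|\bigr)^{-1}$ and $\gamma_2:=\sup_{\varepsilon}\|(L(\varepsilon),B(\varepsilon))^{-1}\|$.

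For the lower estimate I would bound $\|(L(\varepsilon),B(\varepsilon))\|$ uniformly for small $\varepsilon$. Condition (I) and the boundedness of the multiplication action of $(W^{n-1}_\infty)^{m\times m}$ on $(W^n_\infty)^m$ give $\|L(\varepsilon)\|_{(W^n_\infty)^m\to(W^{n-1}_\infty)^m}\le 1+\mathrm{const}\cdot\|A(\cdot;\varepsilon)\|_{n-1,\infty}$, which stays bounded because $A(\cdot;\varepsilon)\to A(\cdot;0)$ in $(W^{n-1}_\infty)^{m\times m}$; condition (II) is pointwise convergence of the family $\{B(\varepsilon)\}$ on $(W^n_\infty)^m$, so the Banach--Steinhaus theorem yields $\sup_{\varepsilon<\varepsilon_1}\|B(\varepsilon)\|<\infty$. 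Hence $\widetilde{d}_{n-1,\infty}(\varepsilon)=\|(L(\varepsilon),B(\varepsilon))z(\cdot;\varepsilon)\|\le\gamma_1^{-1}\|z(\cdot;\varepsilon)\|_{n,\infty}$, which is the left-hand inequality in \eqref{3.6.bound}. For the upper estimate I would invoke invertibility of \eqref{opLB} on $[0,\varepsilon_1)$ — guaranteed by condition $(\ast)$ of Definition~\ref{defin_3}, valid by Theorem~\ref{3_5} — together with a uniform bound $\|(L(\varepsilon),B(\varepsilon))^{-1}\|\le\gamma_2$ on some $[0,\varepsilon_2)$ with $\varepsilon_2\le\varepsilon_1$; then $\|z(\cdot;\varepsilon)\|_{n,\infty}=\|(L(\varepsilon),B(\varepsilon))^{-1}(L(\varepsilon),B(\varepsilon))z(\cdot;\varepsilon)\|\le\gamma_2\,\widetilde{d}_{n-1,\infty}(\varepsilon)$, the right-hand inequality. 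Since $\gamma_1,\gamma_2,\varepsilon_2$ are extracted only from the norms of $A(\cdot;\varepsilon)$ and $B(\varepsilon)$, they do not depend on $f(\cdot;\varepsilon)$, $c(\varepsilon)$, hence not on $y(\cdot;0)$, $y(\cdot;\varepsilon)$.

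The only genuinely non-trivial ingredient is the uniform bound $\|(L(\varepsilon),B(\varepsilon))^{-1}\|\le\gamma_2$, and I expect the work to lie there; it is, however, already contained in the proof of Theorem~\ref{3_5}. The standard route is to reduce the problem to its characteristic $m\times m$ matrix $M(\varepsilon):=B(\varepsilon)Y(\cdot;\varepsilon)$, where $Y(\cdot;\varepsilon)$ is the fundamental matrix of $y'+A(\cdot;\varepsilon)y=0$. Condition (0) means $\det M(0)\ne0$; condition (I) forces $Y(\cdot;\varepsilon)\to Y(\cdot;0)$ in $(W^n_\infty)^{m\times m}$ (continuous dependence of the fundamental matrix on the coefficient in the Sobolev norm), and combining this with (II) and $\sup_\varepsilon\|B(\varepsilon)\|<\infty$ gives $M(\varepsilon)\to M(0)$ in $\mathbb{C}^{m\times m}$; hence $\det M(\varepsilon)\ne0$ and $\|M(\varepsilon)^{-1}\|$ is bounded for $\varepsilon$ small. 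Writing the solution operator explicitly through $M(\varepsilon)^{-1}$, the Volterra integral operator with kernel built from $Y(\cdot;\varepsilon)$, and $B(\varepsilon)$, and using the uniform bounds just listed, one obtains the desired $\gamma_2$. Once Theorem~\ref{3_5} is established along these lines, Theorem~\ref{3.6.th-bound} follows immediately from the two displays above.
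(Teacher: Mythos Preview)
Your proposal is correct and follows the same overall structure as the paper: rewrite $\widetilde{d}_{n-1,\infty}(\varepsilon)$ as $\|(L(\varepsilon),B(\varepsilon))z(\cdot;\varepsilon)\|$ with $z=y(\cdot;0)-y(\cdot;\varepsilon)$, and reduce the two-sided estimate to uniform bounds on $\|(L(\varepsilon),B(\varepsilon))\|$ and $\|(L(\varepsilon),B(\varepsilon))^{-1}\|$ for small $\varepsilon$.

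The one noteworthy difference is how the uniform bound on the inverse is obtained. You propose to go through the characteristic matrix $M(\varepsilon)=B(\varepsilon)Y(\cdot;\varepsilon)$ and an explicit variation-of-constants formula for $(L(\varepsilon),B(\varepsilon))^{-1}$, estimating each factor. The paper instead argues purely at the operator level: by Theorem~\ref{3_5} the conditions of Definition~\ref{defin_3} hold, so $(L(\varepsilon),B(\varepsilon))^{-1}\xrightarrow{s}(L(0),B(0))^{-1}$, and then Banach--Steinhaus (applied along any sequence $\varepsilon^{(k)}\to0$) gives the uniform bound $\gamma_2$ directly. This is shorter and mirrors exactly the argument for the forward bound, where the paper likewise uses $(L(\varepsilon),B(\varepsilon))\xrightarrow{s}(L(0),B(0))$ plus Banach--Steinhaus rather than your separate direct estimate on $L(\varepsilon)$. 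Your constructive route works and yields, in principle, more explicit constants; the paper's route is cleaner because it reuses the strong-convergence machinery already established and avoids writing out the solution operator.

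One small point: from pointwise convergence $B(\varepsilon)y\to B(0)y$ you cannot immediately conclude $\sup_{\varepsilon<\varepsilon_1}\|B(\varepsilon)\|<\infty$; the correct statement (which is all you need) is that the supremum is finite on some smaller interval $[0,\varepsilon_2)$, obtained by the sequence-and-contradiction form of Banach--Steinhaus, exactly as the paper does.
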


By virtue of this theorem, the error and discrepancy of the solution $y(\cdot;\varepsilon)$ of the boundary-value problem \eqref{equation_e}, \eqref{bound_cond_e} have the same order of smallness.

\section{Auxiliary results}\label{section4}

 The theorem presented below contains constructive conditions under which the continuous operator \eqref{opLB} is
invertible for sufficiently small values of the parameter $\varepsilon$ and guarantees the~continuous dependence of solutions
on the parameter in the space $(W^n_\infty)^{m}$.

\begin{theorem}\label{th_sol_conv}
Suppose that the following conditions are satisfied as $\varepsilon\rightarrow 0+$:

\begin{enumerate}
\renewcommand{\theenumi}{\arabic{enumi})}
\renewcommand{\labelenumi}{\arabic{enumi})}
\item\label{th_epsilon_cond_1-bis} $\|A(\cdot;\varepsilon) - A(\cdot;0)\|_{n-1,\infty}\rightarrow 0$ in the space $\left(W^{n-1}_\infty\right)^{m\times m}$;
\item\label{th_epsilon_cond_2-bis-bis} $B(\varepsilon)y\rightarrow B(0)y$ for any $ y\in \left(W^{n}_\infty\right)^m$.
\end{enumerate}
Then, for sufficiently small $\varepsilon>0$,
the operator $\left(L(\varepsilon),B(\varepsilon)\right)$ is invertible. In addition, if
\begin{enumerate}
\renewcommand{\theenumi}{\arabic{enumi})}
\renewcommand{\labelenumi}{\arabic{enumi})}
\setcounter{enumi}{2}
\item\label{th_epsilon_cond_3-bis} $\|f(\cdot;\varepsilon) - f(\cdot;0)\|_{n-1,\infty}\rightarrow 0$ and  $c(\varepsilon)\rightarrow c(0)$,
\end{enumerate}
then the solution $y(\cdot,\varepsilon)$
of problem \eqref{equation_e}, \eqref{bound_cond_e} satisfies the limit property \begin{equation}\label{solution_converge_1}
\left\|y(\cdot;\varepsilon)-y(\cdot;0)\right\|_{n,\infty}\rightarrow 0.
\end{equation}
\end{theorem}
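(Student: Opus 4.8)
The plan is to reduce the infinite-dimensional problem to a finite-dimensional one by means of the fundamental matrix, and then to pass to the limit $\varepsilon\to0+$ in every ingredient of the resulting explicit formula. Let $Y(\cdot;\varepsilon)$ be the solution of the matrix Cauchy problem $Y'(t;\varepsilon)+A(t;\varepsilon)Y(t;\varepsilon)=0$, $Y(a;\varepsilon)=I$. The associated Volterra operator $Z\mapsto I-\int_a^{\cdot}A(s;\varepsilon)Z(s)\,ds$ maps $(W^{n}_\infty)^{m\times m}$ into itself, because $W^{n-1}_\infty([a,b])$ is a Banach algebra under pointwise multiplication and $\int_a^{\cdot}\colon W^{n-1}_\infty\to W^{n}_\infty$ is bounded; some power of it is a contraction, so $Y(\cdot;\varepsilon)\in(W^{n}_\infty)^{m\times m}$, and by Liouville's formula $\det Y(t;\varepsilon)\ne0$ on $[a,b]$, whence also $Y^{-1}(\cdot;\varepsilon)\in(W^{n}_\infty)^{m\times m}$ with $(Y^{-1})'=Y^{-1}A(\cdot;\varepsilon)$. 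Every solution of \eqref{equation_e} equals $y(t;\varepsilon)=Y(t;\varepsilon)\xi+w(t;\varepsilon)$ for some $\xi\in\mathbb{C}^m$, where $w(t;\varepsilon):=Y(t;\varepsilon)\int_a^{t}Y^{-1}(s;\varepsilon)f(s;\varepsilon)\,ds$; substituting into \eqref{bound_cond_e} one checks directly that $(L(\varepsilon),B(\varepsilon))$ is invertible if and only if the numerical matrix $M(\varepsilon)\in\mathbb{C}^{m\times m}$ obtained by applying $B(\varepsilon)$ to the columns of $Y(\cdot;\varepsilon)$ is nonsingular, and that in that case the unique solution corresponds to $\xi(\varepsilon):=M(\varepsilon)^{-1}\bigl(c(\varepsilon)-B(\varepsilon)w(\cdot;\varepsilon)\bigr)$.

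The first principal step is the lemma: condition 1) implies $Y(\cdot;\varepsilon)\to Y(\cdot;0)$ and $Y^{-1}(\cdot;\varepsilon)\to Y^{-1}(\cdot;0)$ in $(W^{n}_\infty)^{m\times m}$ as $\varepsilon\to0+$. I would first note that condition 1) gives $\sup_{\varepsilon}\|A(\cdot;\varepsilon)\|_{n-1,\infty}<\infty$, hence a uniform bound $\sup_{\varepsilon}\|Y(\cdot;\varepsilon)\|_{n,\infty}<\infty$, and then establish the convergence by a bootstrap argument: from $Y(\cdot;\varepsilon)-Y(\cdot;0)=-\int_a^{\cdot}\bigl(A(s;\varepsilon)Y(s;\varepsilon)-A(s;0)Y(s;0)\bigr)\,ds$ one first gets convergence in $L^\infty$ by Gronwall, and then, differentiating the equation and using the Banach-algebra inequality in $W^{n-1}_\infty$ together with boundedness of $\int_a^{\cdot}$, one upgrades this to $\|Y(\cdot;\varepsilon)-Y(\cdot;0)\|_{n,\infty}\le C\,\|A(\cdot;\varepsilon)-A(\cdot;0)\|_{n-1,\infty}\to0$. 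The same reasoning applied to $(Y^{-1})'=Y^{-1}A(\cdot;\varepsilon)$ handles $Y^{-1}(\cdot;\varepsilon)$ (alternatively, use $Y^{-1}=(\det Y)^{-1}\operatorname{adj}Y$ and $\inf_{[a,b]}|\det Y(\cdot;0)|>0$).

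Next, by condition 2) and the Banach--Steinhaus theorem applied on the Banach space $(W^{n}_\infty)^m$ along an arbitrary sequence $\varepsilon_k\to0+$, there are $\delta>0$ and $C_B$ with $\|B(\varepsilon)\|\le C_B$ for $\varepsilon\in[0,\delta)$; consequently, whenever $z(\cdot;\varepsilon)\to z(\cdot;0)$ in $(W^{n}_\infty)^m$ one has $\|B(\varepsilon)z(\cdot;\varepsilon)-B(0)z(\cdot;0)\|\le C_B\|z(\cdot;\varepsilon)-z(\cdot;0)\|_{n,\infty}+\|B(\varepsilon)z(\cdot;0)-B(0)z(\cdot;0)\|\to0$. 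Applying this with $z(\cdot;\varepsilon)=Y(\cdot;\varepsilon)$ column by column gives $M(\varepsilon)\to M(0)$ in $\mathbb{C}^{m\times m}$. By condition (0) the matrix $M(0)$ is nonsingular, and since the determinant is continuous, $M(\varepsilon)$ is nonsingular for all sufficiently small $\varepsilon>0$ with $M(\varepsilon)^{-1}\to M(0)^{-1}$; by the equivalence of the first paragraph, $(L(\varepsilon),B(\varepsilon))$ is then invertible, which is the first assertion.

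For the limit relation \eqref{solution_converge_1}, use condition 3) together with the convergence of $Y^{\pm1}(\cdot;\varepsilon)$ from the lemma, the Banach-algebra inequality, and boundedness of $\int_a^{\cdot}$ to get $w(\cdot;\varepsilon)\to w(\cdot;0)$ in $(W^{n}_\infty)^m$; the operator estimate above then gives $B(\varepsilon)w(\cdot;\varepsilon)\to B(0)w(\cdot;0)$, hence $\xi(\varepsilon)\to\xi(0)$ in $\mathbb{C}^m$; and finally $Y(\cdot;\varepsilon)\xi(\varepsilon)\to Y(\cdot;0)\xi(0)$ in $(W^{n}_\infty)^m$, so that $y(\cdot;\varepsilon)=Y(\cdot;\varepsilon)\xi(\varepsilon)+w(\cdot;\varepsilon)\to y(\cdot;0)$ in $(W^{n}_\infty)^m$. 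I expect the fundamental-matrix lemma to be the main obstacle: securing convergence in the full $(W^{n}_\infty)^{m\times m}$ norm, rather than only in $C$ or $L^\infty$, requires the compactness of $[a,b]$, the Banach-algebra structure of $W^{n-1}_\infty$, and the uniform-in-$\varepsilon$ bounds from condition 1) to be used together; once that lemma and the Banach--Steinhaus bound on $\|B(\varepsilon)\|$ are in hand, the remaining steps are essentially bookkeeping.
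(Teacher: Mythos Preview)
Your proof is correct and follows essentially the same approach as the paper: both reduce invertibility of $(L(\varepsilon),B(\varepsilon))$ to nonsingularity of the matrix $M(\varepsilon)=[B(\varepsilon)Y(\cdot;\varepsilon)]$, rely on the convergence $Y^{\pm1}(\cdot;\varepsilon)\to Y^{\pm1}(\cdot;0)$ in $(W^{n}_\infty)^{m\times m}$ (which the paper cites from \cite{AtlMikh2018} and you sketch via Gronwall plus bootstrap), combine this with the strong convergence of $B(\varepsilon)$ and a Banach--Steinhaus bound to get $M(\varepsilon)\to M(0)$, and then pass to the limit in the variation-of-constants representation of the solution. The only cosmetic difference is that the paper splits $y=v+w$ into a homogeneous-equation/inhomogeneous-boundary part and an inhomogeneous-equation/zero-boundary part (handling the latter via the Cauchy solution $x$), whereas you work directly with the single explicit formula $y=Y\xi+w$ with $\xi=M^{-1}(c-Bw)$; these are equivalent rearrangements of the same computation.
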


We present the proof of Theorem \ref{th_sol_conv} in the form of four lemmas formulated in what follows:
\begin{lemma}\label{L,B}
 Suppose that condition (0) and conditions \ref{th_epsilon_cond_1-bis} and \ref{th_epsilon_cond_2-bis-bis} of Theorem \ref{th_sol_conv} are satisfied. Then, for sufficiently small $\varepsilon>0$, the operator $(L(\varepsilon),B(\varepsilon))$ is invertible.
\end{lemma}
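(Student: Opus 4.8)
The plan is to exploit the fact, already recorded in the excerpt, that for $\varepsilon=0$ the operator $(L(0),B(0))\colon (W^n_\infty)^m\to(W^{n-1}_\infty)^m\times\mathbb{C}^m$ is Fredholm with index zero, and that condition (0) says its kernel is trivial; hence $(L(0),B(0))$ is a bijective bounded operator, and by the Banach theorem its inverse is bounded. The strategy is then a standard perturbation argument: write $(L(\varepsilon),B(\varepsilon)) = (L(0),B(0)) + T(\varepsilon)$, where $T(\varepsilon)\colon (W^n_\infty)^m\to(W^{n-1}_\infty)^m\times\mathbb{C}^m$ is the operator $y\mapsto\bigl((A(\cdot;\varepsilon)-A(\cdot;0))y,\ (B(\varepsilon)-B(0))y\bigr)$, and show that $\|T(\varepsilon)\|\to 0$ as $\varepsilon\to 0+$; then for $\varepsilon$ small enough $\|(L(0),B(0))^{-1}\|\cdot\|T(\varepsilon)\| < 1$, and the standard Neumann-series/perturbation lemma gives invertibility of $(L(\varepsilon),B(\varepsilon))$.

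First I would verify the first component of $T(\varepsilon)$ goes to zero in operator norm. The map $y\mapsto (A(\cdot;\varepsilon)-A(\cdot;0))\,y$ sends $(W^n_\infty)^m$ into $(W^{n-1}_\infty)^m$, and one needs the Banach-algebra/multiplier estimate $\|Gy\|_{n-1,\infty}\le \kappa\,\|G\|_{n-1,\infty}\,\|y\|_{n,\infty}$ for a matrix function $G\in(W^{n-1}_\infty)^{m\times m}$ and $y\in(W^n_\infty)^m$, which holds because $W^{n-1}_\infty[a,b]$ is a Banach algebra and differentiation drops one order of smoothness. Applying this with $G = A(\cdot;\varepsilon)-A(\cdot;0)$ and invoking condition 1), the norm of this component is bounded by $\kappa\,\|A(\cdot;\varepsilon)-A(\cdot;0)\|_{n-1,\infty}\to 0$.

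The genuinely delicate point is the second component, $y\mapsto (B(\varepsilon)-B(0))y$: condition 2) gives only \emph{pointwise} convergence $B(\varepsilon)y\to B(0)y$ for each fixed $y$, not convergence in operator norm, so I cannot directly conclude $\|B(\varepsilon)-B(0)\|\to 0$. The way around this is to observe that $(L(\varepsilon),B(\varepsilon))$ inherits the Fredholm-index-zero property from \cite[Theorem 1]{AtlMikh2018} for every $\varepsilon$, so invertibility is equivalent to triviality of the kernel; and the kernel of $L(\varepsilon)$ is finite-dimensional — spanned by the columns of the fundamental matrix $Y(\cdot;\varepsilon)$ of $y'+A(\cdot;\varepsilon)y=0$ — so it suffices to show that the $m\times m$ matrix $B(\varepsilon)\bigl[Y(\cdot;\varepsilon)\bigr]$ is nonsingular for small $\varepsilon$. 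Here condition 1) gives, by continuous dependence of solutions of linear ODEs on coefficients, that $Y(\cdot;\varepsilon)\to Y(\cdot;0)$ in $(W^n_\infty)^{m\times m}$; then, writing $B(\varepsilon)[Y(\cdot;\varepsilon)] - B(0)[Y(\cdot;0)] = B(\varepsilon)\bigl[Y(\cdot;\varepsilon)-Y(\cdot;0)\bigr] + \bigl(B(\varepsilon)-B(0)\bigr)[Y(\cdot;0)]$, the first term is controlled by the uniform boundedness of $\{B(\varepsilon)\}$ (a consequence of the Banach–Steinhaus theorem applied to the pointwise-convergent family) times $\|Y(\cdot;\varepsilon)-Y(\cdot;0)\|_{n,\infty}\to 0$, and the second term is a finite sum of the pointwise convergences in 2) applied to the columns of $Y(\cdot;0)$, hence $\to 0$. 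Since $B(0)[Y(\cdot;0)]$ is nonsingular by condition (0), so is $B(\varepsilon)[Y(\cdot;\varepsilon)]$ for all sufficiently small $\varepsilon>0$, which gives the claimed invertibility. The main obstacle, then, is precisely converting the pointwise hypothesis (II)/2) into something usable — resolved by reducing to a finite-dimensional determinant condition and by Banach–Steinhaus for uniform boundedness.
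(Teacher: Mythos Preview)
Your proposal is correct, and the argument you actually use---reducing invertibility to the nonsingularity of the finite matrix $B(\varepsilon)[Y(\cdot;\varepsilon)]$, then showing $Y(\cdot;\varepsilon)\to Y(\cdot;0)$ in $(W^n_\infty)^{m\times m}$ and $B(\varepsilon)[Y(\cdot;\varepsilon)]\to B(0)[Y(\cdot;0)]$---is exactly the paper's proof. The paper goes straight to this without the Neumann-series preamble (which you yourself correctly identify as blocked by the merely pointwise hypothesis on $B(\varepsilon)$), and your explicit appeal to Banach--Steinhaus for the uniform boundedness of $\{B(\varepsilon)\}$ makes precise a step the paper leaves implicit.
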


\begin{proof} Under condition \ref{th_epsilon_cond_1-bis}, by the theorem on homeomorphisms from \cite{AtlMikh2018}, we get
\begin{equation}\label{homeomor}
\|Y(\cdot;\varepsilon)-Y(\cdot;0)\|_{n,\infty}\rightarrow 0, \quad \varepsilon\rightarrow 0+.
\end{equation}
Thus, by using condition \ref{th_epsilon_cond_2-bis-bis}, we establish the convergence of numerical matrices:
\begin{equation}\label{BY_converge}
\left[B(\varepsilon)Y(\cdot;\varepsilon)\right] \rightarrow
\left[B(0)Y(\cdot;0)\right], \quad \varepsilon\rightarrow 0+.
\end{equation}
According to condition $(0)$, the limit square matrix is nonsingular \cite[Theorem 2]{AtlMikh2018}.
Hence, for sufficiently
small $\varepsilon\geq 0$, we find
\begin{equation*}
\det
\left[B(\varepsilon)Y(\cdot;\varepsilon)\right]\neq 0.
\end{equation*}
This yields the invertibility of the operator $(L(\varepsilon),B(\varepsilon))$.

\end{proof}

Parallel with the original inhomogeneous boundary-value problem \eqref{equation_e}, \eqref{bound_cond_e} for the vector function $y(t;\varepsilon)$, we consider the following three vector boundary-value problems:
 \begin{equation}\label{v}
    v'(t;\varepsilon)= - A(t;\varepsilon)v(t;\varepsilon),\quad
 B(\varepsilon) v(\cdot;\varepsilon) =c(\varepsilon),
\end{equation}
 \begin{equation}\label{uKoshi}
  x'(t;\varepsilon) + A(t;\varepsilon)x(t;\varepsilon)=f(t;\varepsilon),\quad x(a;\varepsilon)= 0,
 \end{equation}
\begin{equation*}\label{w}
 w'(t;\varepsilon) + A(t;\varepsilon)w(t;\varepsilon)=f(t;\varepsilon),\quad B(\varepsilon) w(\cdot;\varepsilon) = 0,
\end{equation*}
where the parameter $\varepsilon\geq0$ is small. It is known that the boundary-value (Cauchy) problem~\eqref{uKoshi} is uniquely solvable.

By using Lemma \ref{uKoshi}, we arrive at the equality
\begin{equation}\label{spivvidnoshennya*}
 y(\cdot;\varepsilon)=v(\cdot;\varepsilon)+w(\cdot;\varepsilon)
\end{equation}
for small $\varepsilon\geq0$. Hence, in order to prove Theorem \ref{th_sol_conv}, it suffices to show that, under its conditions, the following relations hold as $\varepsilon\rightarrow 0+$:
\begin{equation}\label{v_1,1}
\|v(\cdot;\varepsilon)-v(\cdot;0)\|_{n,\infty} \rightarrow 0,
\end{equation}
\begin{equation}\label{w_1,1}
\|w(\cdot;\varepsilon)-w(\cdot;0)\|_{n,\infty} \rightarrow 0.
\end{equation}

\begin{lemma}\label{v_e}
Suppose that the conditions of Theorem \ref{th_sol_conv} are satisfied as $\varepsilon\rightarrow 0+$. Then the limit relation \eqref{v_1,1} is true.
\end{lemma}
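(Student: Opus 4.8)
The plan is to make the solution $v(\cdot;\varepsilon)$ explicit through the fundamental matrix of the homogeneous system and then reduce the claim to convergences that are already available. First I would let $Y(\cdot;\varepsilon)$ denote the fundamental matrix of $Y'=-A(\cdot;\varepsilon)Y$ normalized by $Y(a;\varepsilon)=I_m$, so that every solution of the first problem in \eqref{v} has the form $v(\cdot;\varepsilon)=Y(\cdot;\varepsilon)\xi(\varepsilon)$ for some constant vector $\xi(\varepsilon)\in\mathbb{C}^m$, and the boundary condition $B(\varepsilon)v(\cdot;\varepsilon)=c(\varepsilon)$ becomes $[B(\varepsilon)Y(\cdot;\varepsilon)]\,\xi(\varepsilon)=c(\varepsilon)$. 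By condition (0) and \cite[Theorem 2]{AtlMikh2018} the matrix $[B(0)Y(\cdot;0)]$ is nonsingular, and by \eqref{BY_converge} one has $[B(\varepsilon)Y(\cdot;\varepsilon)]\to[B(0)Y(\cdot;0)]$; hence for all sufficiently small $\varepsilon$ these matrices are invertible and $[B(\varepsilon)Y(\cdot;\varepsilon)]^{-1}\to[B(0)Y(\cdot;0)]^{-1}$ by continuity of matrix inversion. Combining this with condition \ref{th_epsilon_cond_3-bis} yields $\xi(\varepsilon)=[B(\varepsilon)Y(\cdot;\varepsilon)]^{-1}c(\varepsilon)\to\xi(0)$ in $\mathbb{C}^m$.

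Next I would bound the error in the Sobolev norm by the elementary splitting
$$v(\cdot;\varepsilon)-v(\cdot;0)=\bigl(Y(\cdot;\varepsilon)-Y(\cdot;0)\bigr)\xi(\varepsilon)+Y(\cdot;0)\bigl(\xi(\varepsilon)-\xi(0)\bigr),$$
whence
$$\|v(\cdot;\varepsilon)-v(\cdot;0)\|_{n,\infty}\leq\|Y(\cdot;\varepsilon)-Y(\cdot;0)\|_{n,\infty}\,\|\xi(\varepsilon)\|_{\mathbb{C}^m}+\|Y(\cdot;0)\|_{n,\infty}\,\|\xi(\varepsilon)-\xi(0)\|_{\mathbb{C}^m},$$
using only the multiplicativity of the norm of $\bigl(W^{n}_\infty\bigr)^{m\times m}$ applied to a matrix function against a constant vector. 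On the right-hand side $\|Y(\cdot;0)\|_{n,\infty}$ is a fixed constant and $\|\xi(\varepsilon)\|_{\mathbb{C}^m}$ stays bounded because $\xi(\varepsilon)$ converges, so both summands tend to $0$: the second because $\xi(\varepsilon)\to\xi(0)$, the first because $\|Y(\cdot;\varepsilon)-Y(\cdot;0)\|_{n,\infty}\to0$ by \eqref{homeomor}. This gives \eqref{v_1,1}.

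The only genuinely nontrivial input is the convergence \eqref{homeomor} of the fundamental matrices in the full $\bigl(W^{n}_\infty\bigr)^{m\times m}$ norm, but this is precisely what the theorem on homeomorphisms from \cite{AtlMikh2018} delivers under condition \ref{th_epsilon_cond_1-bis}, and it was already recorded in the proof of Lemma \ref{L,B}; everything else is continuity of finite-dimensional linear algebra together with the Banach-algebra property of $W^{n}_\infty$. Consequently I anticipate no serious obstacle, the only care being to fix $\varepsilon$ small enough that $\det[B(\varepsilon)Y(\cdot;\varepsilon)]\neq0$ — the same threshold already used in Lemma \ref{L,B}.
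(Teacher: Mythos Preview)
Your proposal is correct and follows essentially the same route as the paper: represent $v(\cdot;\varepsilon)=Y(\cdot;\varepsilon)\widetilde{c}(\varepsilon)$, use \eqref{BY_converge} together with the nonsingularity of $[B(0)Y(\cdot;0)]$ and $c(\varepsilon)\to c(0)$ to get $\widetilde{c}(\varepsilon)\to\widetilde{c}(0)$, and then combine with \eqref{homeomor}. Your explicit splitting of $v(\cdot;\varepsilon)-v(\cdot;0)$ simply spells out what the paper compresses into the single sentence ``Relation \eqref{v_1,1} is derived from \eqref{homeomor} and \eqref{vv}.''
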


\begin{proof} By using the first equality in the boundary-value problem \eqref{v}, we obtain
\begin{equation}\label{vv}
v(\cdot;\varepsilon)=Y(\cdot;\varepsilon) \widetilde{c}(\varepsilon)
\end{equation}
for some $\widetilde{c}(\varepsilon) \in \mathbb{C}^{m}$. In view of the second equality in problem \eqref{v}, we find
 $$[B(\varepsilon) Y(\cdot;\varepsilon)]\widetilde{c}(\varepsilon) =c(\varepsilon).$$
By virtue of Lemma \ref{L,B}, the criterion of invertibility from \cite[Theorem 2]{AtlMikh2018}, relation \eqref{BY_converge}, and condition~\ref{th_epsilon_cond_2-bis-bis}, we get
$$
\widetilde{c}(\varepsilon)=[B(\varepsilon) Y(\cdot;\varepsilon)]^{-1}c(\varepsilon) \rightarrow [B(0) Y(\cdot;0)]^{-1}c(0)=\widetilde{c}(0), \quad \varepsilon\rightarrow 0+.
$$
Relation \eqref{v_1,1} is derived from \eqref{homeomor} and \eqref{vv}.

\end{proof}

\begin{lemma}\label{Koshi}
Suppose that conditions \ref{th_epsilon_cond_1-bis}--\ref{th_epsilon_cond_3-bis} of Theorem \ref{th_sol_conv} are satisfied as $\varepsilon\rightarrow 0+$ Then the~solution of problem~\eqref{uKoshi} has the following property:
\begin{equation}\label{Koshi e}
 \|x(\cdot;\varepsilon)-x(\cdot;0)\|_{n,\infty}\rightarrow 0,\quad \varepsilon\rightarrow 0+.
\end{equation}
 \end{lemma}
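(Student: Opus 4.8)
The plan is to represent the solution of the Cauchy problem \eqref{uKoshi} explicitly through the fundamental matrix and then pass to the limit $\varepsilon\to0+$, exploiting the convergence \eqref{homeomor} and the fact that $W^{n}_\infty$ is a Banach algebra. First I would record that, since $A(\cdot;\varepsilon)\in(W^{n-1}_\infty)^{m\times m}$ and pointwise multiplication propagates this regularity, the normalized fundamental matrix $Y(\cdot;\varepsilon)$ of the homogeneous system $x'+A(\cdot;\varepsilon)x=0$ with $Y(a;\varepsilon)=I_m$ belongs to $(W^{n}_\infty)^{m\times m}$ and is everywhere nonsingular, so the unique solution of \eqref{uKoshi} is given by the variation-of-parameters formula
\begin{equation*}
x(t;\varepsilon)=Y(t;\varepsilon)\int_a^t Y^{-1}(s;\varepsilon)\,f(s;\varepsilon)\,ds,\qquad t\in[a,b].
\end{equation*}

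Next I would collect three ingredients, all as $\varepsilon\to0+$. The convergence $\|Y(\cdot;\varepsilon)-Y(\cdot;0)\|_{n,\infty}\to0$ is precisely \eqref{homeomor}, a consequence of condition \ref{th_epsilon_cond_1-bis} and the theorem on homeomorphisms from \cite{AtlMikh2018}. The analogous convergence $\|Y^{-1}(\cdot;\varepsilon)-Y^{-1}(\cdot;0)\|_{n,\infty}\to0$ I would obtain either by applying the same homeomorphism theorem to the transposed linear system, whose coefficient matrix is $-A^{\top}(\cdot;\varepsilon)$ (again convergent in $(W^{n-1}_\infty)^{m\times m}$, the transpose and sign being norm-preserving) and whose normalized fundamental matrix is $\bigl(Y^{-1}(\cdot;\varepsilon)\bigr)^{\top}$, or directly from the first convergence together with continuity of matrix inversion in the Banach algebra $(W^{n}_\infty)^{m\times m}$ at the invertible element $Y(\cdot;0)$. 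Finally, $\|f(\cdot;\varepsilon)-f(\cdot;0)\|_{n-1,\infty}\to0$ is condition \ref{th_epsilon_cond_3-bis}.

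Then I would assemble these. Using that $W^{n-1}_\infty$ is a Banach algebra, pointwise multiplication is continuous from $(W^{n}_\infty)^{m\times m}\times(W^{n-1}_\infty)^m$ into $(W^{n-1}_\infty)^m$, so the integrands $Y^{-1}(\cdot;\varepsilon)f(\cdot;\varepsilon)$ converge to $Y^{-1}(\cdot;0)f(\cdot;0)$ in $(W^{n-1}_\infty)^m$. Since the antiderivative operator $g\mapsto\int_a^{\cdot}g(s)\,ds$ is bounded from $(W^{n-1}_\infty)^m$ into $(W^{n}_\infty)^m$ (integration raises the Sobolev order by one and multiplies the sup-norm by at most $b-a$), the primitives converge in $(W^{n}_\infty)^m$; multiplying once more by $Y(\cdot;\varepsilon)$ and invoking the Banach-algebra property of $(W^{n}_\infty)^{m\times m}$ together with \eqref{homeomor} yields $x(\cdot;\varepsilon)\to x(\cdot;0)$ in $(W^{n}_\infty)^m$, which is \eqref{Koshi e}.

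The main obstacle I anticipate is the step concerning $Y^{-1}$: one must ensure that the inverse fundamental matrices converge in the full $W^{n}_\infty$ norm, not merely in $L_\infty$, which is why it is convenient to reduce it to the already-available homeomorphism theorem (for the transposed system) or to a Banach-algebra inversion argument. The remaining work is routine bookkeeping of products and integrals of convergent sequences in the scale $W^{n}_\infty\hookrightarrow W^{n-1}_\infty$; the one point to state explicitly is that all the constants involved — the Banach-algebra constants, the norm of the integration operator, and a uniform lower bound for $|\det Y(\cdot;\varepsilon)|$ for small $\varepsilon$ — are independent of $\varepsilon$, which is what legitimizes the passage to the limit.
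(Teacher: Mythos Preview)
Your argument is correct and follows essentially the same route as the paper: represent $x(\cdot;\varepsilon)$ by the variation-of-parameters formula, invoke the homeomorphism theorem from \cite{AtlMikh2018} to get $\|Y^{\pm1}(\cdot;\varepsilon)-Y^{\pm1}(\cdot;0)\|_{n,\infty}\to0$, then combine this with condition~\ref{th_epsilon_cond_3-bis} and the Banach-algebra property to pass to the limit. The only cosmetic differences are that the paper places $Y^{-1}$ outside and $Y$ inside the integral (a harmless normalization choice) and is terser about the integration step, whereas you spell out explicitly that $g\mapsto\int_a^{\cdot}g$ is bounded from $(W^{n-1}_\infty)^m$ to $(W^{n}_\infty)^m$.
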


\begin{proof} Assume that the number $\varepsilon>0$ is sufficiently small. The solution of problem \eqref{uKoshi} admits the following representation:
\begin{equation}\label{x}
x(t;\varepsilon)=Y^{-1}(t;\varepsilon)\int\limits_a^t Y(s;\varepsilon)f(s;\varepsilon){\rm d}s.
\end{equation}
Under condition \ref{th_epsilon_cond_1-bis}, by the theorem on homeomorphisms in \cite{AtlMikh2018}, we get
\begin{equation}\label{Y+-}
\bigl\|Y^{\pm1}(\cdot;\varepsilon)-Y^{\pm1}(\cdot;0)\bigr\|_{n,\infty}\rightarrow 0
\end{equation}
as $\varepsilon\rightarrow 0+$. According to condition \ref{th_epsilon_cond_3-bis} and relation \eqref{Y+-}, we find
\begin{equation}\label{Yf}
\|Y(\cdot;\varepsilon)f(\cdot;\varepsilon)-Y(\cdot;0)f(\cdot;0)\|_{n-1,\infty}\rightarrow 0
\end{equation}
because $W^{n}_\infty$ is a Banach algebra. Thus, relation \eqref{Koshi e} follows from relations \eqref{x}--\eqref{Yf}.

\end{proof}

\begin{lemma}\label{vor w}
Under the conditions of Theorem \ref{th_sol_conv}, the limit relation \eqref{w_1,1} is true.
\end{lemma}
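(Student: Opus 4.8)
The plan is to reduce the claim, exactly as was done for $v$ in Lemma~\ref{v_e}, to the behaviour of a finite-dimensional coefficient vector. Since $w(\cdot;\varepsilon)$ and the Cauchy solution $x(\cdot;\varepsilon)$ of~\eqref{uKoshi} satisfy the same differential equation with the same right-hand side $f(\cdot;\varepsilon)$, their difference solves the corresponding homogeneous system and therefore has the form $Y(\cdot;\varepsilon)\widehat{c}(\varepsilon)$ for some $\widehat{c}(\varepsilon)\in\mathbb{C}^{m}$. Thus
\begin{equation*}
w(\cdot;\varepsilon)=x(\cdot;\varepsilon)+Y(\cdot;\varepsilon)\widehat{c}(\varepsilon),
\end{equation*}
and applying $B(\varepsilon)$ together with the boundary condition $B(\varepsilon)w(\cdot;\varepsilon)=0$ from the definition of $w$ yields $[B(\varepsilon)Y(\cdot;\varepsilon)]\widehat{c}(\varepsilon)=-B(\varepsilon)x(\cdot;\varepsilon)$. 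By Lemma~\ref{L,B} and the invertibility criterion of \cite[Theorem~2]{AtlMikh2018}, the matrix $B(\varepsilon)Y(\cdot;\varepsilon)$ is nonsingular for all sufficiently small $\varepsilon\geq0$, so
\begin{equation*}
\widehat{c}(\varepsilon)=-[B(\varepsilon)Y(\cdot;\varepsilon)]^{-1}B(\varepsilon)x(\cdot;\varepsilon),
\end{equation*}
and the analogous representation holds at $\varepsilon=0$.

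Next I would show $\widehat{c}(\varepsilon)\to\widehat{c}(0)$ in $\mathbb{C}^{m}$. By \eqref{BY_converge} and the nonsingularity of $B(0)Y(\cdot;0)$, the inverse matrices converge, $[B(\varepsilon)Y(\cdot;\varepsilon)]^{-1}\to[B(0)Y(\cdot;0)]^{-1}$. For the remaining factor I would invoke Lemma~\ref{Koshi}, which gives $x(\cdot;\varepsilon)\to x(\cdot;0)$ in $\bigl(W^{n}_\infty\bigr)^{m}$, and write
\begin{equation*}
B(\varepsilon)x(\cdot;\varepsilon)-B(0)x(\cdot;0)=B(\varepsilon)\bigl(x(\cdot;\varepsilon)-x(\cdot;0)\bigr)+\bigl(B(\varepsilon)x(\cdot;0)-B(0)x(\cdot;0)\bigr).
\end{equation*}
The second summand tends to zero by condition~\ref{th_epsilon_cond_2-bis-bis}. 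For the first summand one uses that the family $\{B(\varepsilon)\}$ is uniformly bounded on $\bigl(W^{n}_\infty\bigr)^{m}$: the pointwise convergence in~\ref{th_epsilon_cond_2-bis-bis} together with the Banach--Steinhaus theorem gives $\sup_{\varepsilon}\|B(\varepsilon)\|<\infty$, whence $\|B(\varepsilon)(x(\cdot;\varepsilon)-x(\cdot;0))\|_{\mathbb{C}^{m}}\leq C\,\|x(\cdot;\varepsilon)-x(\cdot;0)\|_{n,\infty}\to0$. Hence $B(\varepsilon)x(\cdot;\varepsilon)\to B(0)x(\cdot;0)$, and therefore $\widehat{c}(\varepsilon)\to\widehat{c}(0)$.

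Finally, I would combine this with \eqref{homeomor}, i.e. $Y(\cdot;\varepsilon)\to Y(\cdot;0)$ in $\bigl(W^{n}_\infty\bigr)^{m\times m}$, and the fact that $W^{n}_\infty$ is a Banach algebra to conclude $Y(\cdot;\varepsilon)\widehat{c}(\varepsilon)\to Y(\cdot;0)\widehat{c}(0)$ in $\bigl(W^{n}_\infty\bigr)^{m}$; adding this to the convergence $x(\cdot;\varepsilon)\to x(\cdot;0)$ from Lemma~\ref{Koshi} gives $w(\cdot;\varepsilon)\to w(\cdot;0)$ in $\bigl(W^{n}_\infty\bigr)^{m}$, which is \eqref{w_1,1}. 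The main obstacle is the middle step: one passes to the limit in $B(\varepsilon)x(\cdot;\varepsilon)$ where both the operator and its argument depend on $\varepsilon$, so the pointwise convergence assumed in~\ref{th_epsilon_cond_2-bis-bis} is not by itself sufficient, and the uniform boundedness of the operators $B(\varepsilon)$ furnished by Banach--Steinhaus is precisely what closes the argument. Everything else is a routine combination of the already-established Lemmas~\ref{L,B}, \ref{v_e}, and~\ref{Koshi}.
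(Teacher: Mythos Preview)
Your argument is correct and follows essentially the same route as the paper: the paper sets $u(\cdot;\varepsilon):=x(\cdot;\varepsilon)-w(\cdot;\varepsilon)$, observes that $u$ solves a problem of type~\eqref{v} with right-hand side $\widetilde{c}(\varepsilon)=B(\varepsilon)x(\cdot;\varepsilon)$, and then invokes Lemma~\ref{v_e} as a black box, whereas you write $w=x+Y\widehat{c}$ and redo the Lemma~\ref{v_e} computation inline. Your treatment of the step $B(\varepsilon)x(\cdot;\varepsilon)\to B(0)x(\cdot;0)$ via the Banach--Steinhaus uniform bound is in fact more careful than the paper, which simply asserts this convergence from condition~\ref{th_epsilon_cond_2-bis-bis} and Lemma~\ref{Koshi}.
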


\begin{proof} The vector function $u(\cdot;\varepsilon)=x(\cdot;\varepsilon)-w(\cdot;\varepsilon)$ is a solution of a boundary-value problem of the form \eqref{v}:
\begin{equation*}\label{u1}
\begin{array}{c}
u'(t;\varepsilon)=-A(t;\varepsilon)u(t;\varepsilon),  \\
 B(\varepsilon) u(\cdot;\varepsilon) = B(\varepsilon)x(\cdot;\varepsilon) =: \widetilde{c}(\varepsilon).
 \end{array} \end{equation*}
By using property \ref{th_epsilon_cond_2-bis-bis} and Lemma \ref{Koshi}, we get $\widetilde{c}(\varepsilon)\rightarrow \widetilde{c}(0)$ as $\varepsilon\rightarrow 0+$.
 It follows from Lemma \ref{v_e} that
 \begin{equation}\label{u_1,1}
 \|u(\cdot;\varepsilon)-u(\cdot;0)\|_{n,\infty} \rightarrow 0,\quad \varepsilon\rightarrow 0+.
 \end{equation}
In view of the equality $w(\cdot;\varepsilon)=x(\cdot;\varepsilon)- u(\cdot;\varepsilon)$ and relations (\ref{Koshi e}) and (\ref{u_1,1}), we obtain (\ref{w_1,1}).

The required limit property \eqref{solution_converge_1} is a direct corollary of equality \eqref{spivvidnoshennya*} and Lemmas~ \ref{v_e} and~\ref{vor w}.

\end{proof}

Theorem \ref{th_sol_conv} is proved.

\begin{remark}\label{eqiv def}
Definition \ref{defin_3} is equivalent to the following definition:
\end{remark}

\begin{definition}\label{defin_3'} We say that a solution of the boundary-value problem \eqref{equation_e}, \eqref{bound_cond_e} continuously depends on the parameter $\varepsilon$ for $\varepsilon=0$ if the following conditions are satisfied:
\begin{itemize}
\item[$(\ast)$] there exists a positive number $\varepsilon_{1}<\varepsilon_{0}$ such that, for any $\varepsilon\in[0,\varepsilon_{1})$, arbitrary right-hand sides $f(\cdot)\in \left(W^{n-1}_\infty\right)^{m}$, and  $c\in\mathbb{C}^{m}$, this problem possesses a unique solution $y(\cdot;\varepsilon)\in\left(W^{n}_\infty\right)^{m}$;
\item [$(\ast\ast)$] the following limit relation for the convergence of solutions is true:
\begin{equation*}\label{4.gu'}
y(\cdot;\varepsilon)\to y(\cdot;0)\quad\mbox{in}\quad\left(W^{n}_\infty\right)^{m}
\quad\mbox{as}\quad\varepsilon\to0+.
\end{equation*}
\end{itemize}
\end{definition}

The conditions of Definition \ref{defin_3'} directly follow from Definition \ref{defin_3}. We now prove the converse implication.

By Theorem \ref{th_sol_conv}, the operator  $\bigl(L(\varepsilon),B(\varepsilon)\bigr)$ has a bounded inverse operator $$\bigl(L(\varepsilon),B(\varepsilon)\bigr)^{-1}\colon\left(W^{n-1}_\infty\right)^m\times\mathbb{C}^{m} \to \left(W^{n}_\infty\right)^m$$ for any $\varepsilon \in [0,\varepsilon_2')$. Moreover, by Definition \ref{defin_3'}, for sufficiently small $\varepsilon$, we get the following strong convergence of inverse operators:
\begin{equation}\label{zb ob op}
(L(\varepsilon),B(\varepsilon))^{-1} \xrightarrow{s} (L(0),B(0))^{-1},
\end{equation}
as well as the convergence of right-hand sides:
\begin{equation}\label{zb fc}
f(\cdot;\varepsilon)\rightarrow f(\cdot;0), \quad c(\varepsilon)\rightarrow c(0).
\end{equation}

We choose $f(\cdot;\varepsilon) \in \left(W^{n-1}_\infty\right)^{m}$ and $c(\varepsilon) \in \mathbb{C}^{m}$. Then the equalities
\begin{equation}\label{riv ob}
y(\cdot;\varepsilon)=(L(\varepsilon),B(\varepsilon))^{-1}(f(\cdot;\varepsilon),c(\varepsilon)),
\end{equation}
\begin{equation}\label{riv ob1}
y(\cdot;0)=(L(0),B(0))^{-1}(f(\cdot;0),c(0)),
\end{equation}
are true, i.e., the convergence of $y(\cdot;\varepsilon)$ to $y(\cdot;0)$ is equivalent to the convergence
\begin{equation}\label{zb ob}
(L(\varepsilon),B(\varepsilon))^{-1} (f(\cdot;\varepsilon),c(\varepsilon)) \rightarrow  (L(0),B(0))^{-1}(f(\cdot;\varepsilon),c(\varepsilon)), \quad \varepsilon\to0+.
\end{equation}
By the Banach–-Steinhaus theorem, for sufficiently small $\varepsilon$, we obtain
\begin{equation}\label{2.1.4}
\quad \bigl\|(L(\varepsilon),B(\varepsilon))^{-1} \bigr\|\leqslant C.
\end{equation}
Since
$$
\bigl\|(L(\varepsilon),B(\varepsilon))^{-1}(f(\cdot;\varepsilon),c(\varepsilon))-(L(0),B(0))^{-1}(f(\cdot;0),c(0))\bigr\|\leqslant$$
$$\leqslant\bigl\|(L(\varepsilon),B(\varepsilon))^{-1}\bigr\|\bigl\|(f(\cdot;\varepsilon),c(\varepsilon))-(f(\cdot;0),c(0))\bigr\| +$$ $${}+\bigl\|[(L(\varepsilon),B(\varepsilon))^{-1}-(L(0),B(0))^{-1}](f(\cdot;0),c(0))\bigr\|,
$$
by using conditions \eqref{zb ob op}--\eqref{2.1.4}, we prove that the limit relation $(\ast\ast)$ in Definition \ref{defin_3} is true.

We now establish one more auxiliary result. Assume that the operator $(L(0),B(0))$ is invertible. We now consider the following two conditions:
\begin{itemize}
\item[$(i)$] the operator $(L(\varepsilon),B(\varepsilon))^{-1}$ converges to the operator $(L(0),B(0))^{-1}$ in the strong operator topology;
\item [$(ii)$] the operator $(L(\varepsilon),B(\varepsilon))$ converges to the operator $(L(0),B(0))$ in the strong operator topology.
\end{itemize}

\begin{theorem}\label{2.dop}
Conditions $(i)$ and $(ii)$ are equivalent, i.e., as $\varepsilon\to0+$,
\begin{equation}\label{operatoru}
(L(\varepsilon),B(\varepsilon))^{-1}\xrightarrow{s}(L(0),B(0))^{-1}\Longleftrightarrow(L(\varepsilon),B(\varepsilon))\xrightarrow{s}(L(0),B(0)).
\end{equation}
\end{theorem}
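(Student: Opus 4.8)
The plan is to prove the two implications in \eqref{operatoru} separately. The direction $(ii)\Rightarrow(i)$ is essentially a reformulation of Theorem~\ref{th_sol_conv}, while $(i)\Rightarrow(ii)$ is the substantive one and cannot be obtained from abstract operator theory alone: for a general family of boundedly invertible operators on Banach spaces, strong convergence of the inverses does not force strong convergence of the operators, so the special structure of $(L(\varepsilon),B(\varepsilon))$ — in particular the Cauchy matrix $Y(\cdot;\varepsilon)$ of the homogeneous system and the Banach-algebra structure of the Sobolev spaces — must be exploited.

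For $(ii)\Rightarrow(i)$: strong convergence $(L(\varepsilon),B(\varepsilon))\xrightarrow{s}(L(0),B(0))$ means $L(\varepsilon)y\to L(0)y$ in $\bigl(W^{n-1}_\infty\bigr)^m$ and $B(\varepsilon)y\to B(0)y$ for every $y$; the latter is condition~2) of Theorem~\ref{th_sol_conv}, and since $L(\varepsilon)y-L(0)y=(A(\cdot;\varepsilon)-A(\cdot;0))y$, testing on the constant vector functions $e_1,\dots,e_m$ and using that $W^{n-1}_\infty$ is a Banach algebra shows the former is equivalent to condition~1). Invertibility of $(L(0),B(0))$ is exactly condition~(0) (this operator being Fredholm of index zero), so by Lemma~\ref{L,B} the operator $(L(\varepsilon),B(\varepsilon))$ is invertible for small $\varepsilon$, and Theorem~\ref{th_sol_conv} applied with constant data $f(\cdot;\varepsilon)\equiv f$, $c(\varepsilon)\equiv c$ (so condition~3) holds automatically) gives $(L(\varepsilon),B(\varepsilon))^{-1}(f,c)\to(L(0),B(0))^{-1}(f,c)$; as $(f,c)$ is arbitrary, this is $(i)$.

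For $(i)\Rightarrow(ii)$, the plan is as follows. By Banach--Steinhaus, $(i)$ yields $\|(L(\varepsilon),B(\varepsilon))^{-1}\|\le C$ for small $\varepsilon$. First recover condition~(I): for $c\in\mathbb{C}^m$ the function $v(\cdot;\varepsilon):=(L(\varepsilon),B(\varepsilon))^{-1}(0,c)$ equals $Y(\cdot;\varepsilon)M_\varepsilon c$ with $M_\varepsilon:=[B(\varepsilon)Y(\cdot;\varepsilon)]^{-1}$ (nonsingular for small $\varepsilon$ by the invertibility criterion of \cite[Theorem~2]{AtlMikh2018}), and $v(\cdot;\varepsilon)\to v(\cdot;0)$ in $\bigl(W^n_\infty\bigr)^m$ by $(i)$; since $W^n_\infty\hookrightarrow C([a,b])$, evaluation at $t=a$ is continuous and $Y(a;\varepsilon)=I$, so $M_\varepsilon\to M_0$ and hence $M_\varepsilon^{-1}\to M_0^{-1}$ in $\mathbb{C}^{m\times m}$. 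Testing on $e_1,\dots,e_m$ gives $Y(\cdot;\varepsilon)M_\varepsilon\to Y(\cdot;0)M_0$ in $\bigl(W^n_\infty\bigr)^{m\times m}$, whence $Y(\cdot;\varepsilon)=(Y(\cdot;\varepsilon)M_\varepsilon)M_\varepsilon^{-1}\to Y(\cdot;0)$ and $Y^{-1}(\cdot;\varepsilon)\to Y^{-1}(\cdot;0)$; since $A(\cdot;\varepsilon)=-Y'(\cdot;\varepsilon)Y^{-1}(\cdot;\varepsilon)$, the Banach-algebra property gives $\|A(\cdot;\varepsilon)-A(\cdot;0)\|_{n-1,\infty}\to0$, i.e.\ (I). Next recover (II): fix $y\in\bigl(W^n_\infty\bigr)^m$, so $g_\varepsilon:=L(\varepsilon)y\to g_0:=L(0)y$ in $\bigl(W^{n-1}_\infty\bigr)^m$ by (I). In the splitting $y=(L(\varepsilon),B(\varepsilon))^{-1}(g_\varepsilon,0)+(L(\varepsilon),B(\varepsilon))^{-1}(0,B(\varepsilon)y)$ the first term converges to $(L(0),B(0))^{-1}(g_0,0)$ (the maps $g\mapsto(L(\varepsilon),B(\varepsilon))^{-1}(g,0)$ are uniformly bounded and converge strongly), so the second term, which equals $Y(\cdot;\varepsilon)M_\varepsilon B(\varepsilon)y$, converges in $\bigl(W^n_\infty\bigr)^m$ to some $\xi$; evaluating at $t=a$ and using $M_\varepsilon^{-1}\to M_0^{-1}$ gives $B(\varepsilon)y\to M_0^{-1}\xi(a)$, and the same splitting at $\varepsilon=0$ identifies this limit as $B(0)y$. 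Thus (I) and (II), i.e.\ $(ii)$, hold.

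I expect the main obstacle to be exactly this last implication: continuity of inversion is unavailable in the strong operator topology, and the Banach--Steinhaus bound on $\|(L(\varepsilon),B(\varepsilon))^{-1}\|$ does not by itself bound $\|(L(\varepsilon),B(\varepsilon))\|$ (equivalently $\|A(\cdot;\varepsilon)\|_{n-1,\infty}$ and $\|B(\varepsilon)\|$). What rescues the argument is the passage to the Cauchy matrix $Y(\cdot;\varepsilon)$ together with the continuity of point evaluation at the left endpoint on $\bigl(W^n_\infty\bigr)^m$ and the algebra structure of the spaces $W^k_\infty$, which together let one read off the limits of $A(\cdot;\varepsilon)$ and of $B(\varepsilon)y$ from the behaviour of the inverse operators.
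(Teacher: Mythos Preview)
Your proof is correct, and for the direction $(ii)\Rightarrow(i)$ it coincides with the paper's route through Theorem~\ref{th_sol_conv}. The direction $(i)\Rightarrow(ii)$, however, is handled differently. The paper does not argue directly: it observes that $(i)$ is precisely the condition $(\ast\ast)$ of Definition~\ref{defin_3'} (equivalently Definition~\ref{defin_3}), and then simply invokes Theorem~\ref{3_5}, whose necessity part is proved separately in Section~\ref{section5}. There the paper recovers condition~(I) by solving the matrix problem $Y'+AY=O_m$, $[BY]=I_m$ (a fundamental matrix normalised by the boundary operator, not at $t=a$), and recovers condition~(II) in two stages: first a contradiction argument to show $\|B(\varepsilon)\|=O(1)$, then an operator identity to obtain $B(\varepsilon)y\to B(0)y$. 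Your argument is more self-contained: by working with the standard Cauchy matrix $Y(a;\varepsilon)=I$ and exploiting the continuity of point evaluation at $t=a$ on $W^n_\infty$, you extract $M_\varepsilon\to M_0$ and then $Y(\cdot;\varepsilon)\to Y(\cdot;0)$ constructively, and your splitting $y=(L(\varepsilon),B(\varepsilon))^{-1}(g_\varepsilon,0)+Y(\cdot;\varepsilon)M_\varepsilon B(\varepsilon)y$ lets you read off the limit of $B(\varepsilon)y$ directly, bypassing the paper's boundedness-by-contradiction step entirely. The trade-off is that the paper's organisation makes Theorem~\ref{2.dop} an immediate corollary of the main criterion, while your approach stands on its own and yields an independent proof of the necessity half of Theorem~\ref{3_5}.
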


We split the procedure of substantiation of equivalence \eqref{operatoru} into two steps.

\emph{Step 1.} We prove that the strong convergence of inverse operators is equivalent to the set of conditions (I) and (II).

\emph{Step 2.} We prove that the set of conditions (I) and (II) is equivalent to the strong convergence of operators.

Indeed, by Theorem \ref{3_5}, the operator $(L(\varepsilon),B(\varepsilon))$ has a bounded inverse operator $(L(\varepsilon),B(\varepsilon))^{-1}$. Moreover, we also have strong convergence of the inverse operators. Thus, Theorem \ref{3_5} immediately implies the validity of conditions of Step 1. Note that, for any continuous operators acting in infinite-dimensional Banach spaces and depending on $\varepsilon$, this equivalence is not true.

We now proceed to Step 2. To prove this assertion, we show that the following lemma is true:

\begin{lemma}\label{2.l.zb.l}
The boundary condition \textup{(I)} is equivalent to each of the following conditions:
\begin{itemize}
\item[$(a_1)$] $\|L(\varepsilon)- L(0)\|\to0$ as $\varepsilon\to0+$;
\item [$(a_2)$] $L(\varepsilon)y \to L(0)y$ in $\left(W^{n-1}_\infty\right)^{m}$ as $\varepsilon\to0+$ for every $y\in\left(W^{n}_\infty\right)^{m}$.
\end{itemize}
\end{lemma}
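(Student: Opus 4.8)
The plan is to exploit the fact that the family $L(\varepsilon)$ depends on $\varepsilon$ only through its zero-order term. Indeed, since $L(\varepsilon)y-L(0)y=\bigl(A(\cdot;\varepsilon)-A(\cdot;0)\bigr)y$, the operator $L(\varepsilon)-L(0)\colon\bigl(W^{n}_\infty\bigr)^{m}\to\bigl(W^{n-1}_\infty\bigr)^{m}$ is precisely multiplication by the matrix function $A(\cdot;\varepsilon)-A(\cdot;0)$, and the whole lemma reduces to a statement about multiplication operators. I would establish the cycle of implications $(\mathrm I)\Rightarrow(a_1)\Rightarrow(a_2)\Rightarrow(\mathrm I)$, which yields both equivalences at once.

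For $(\mathrm I)\Rightarrow(a_1)$, the key step is a quantitative bound: for any $M\in\bigl(W^{n-1}_\infty\bigr)^{m\times m}$ the multiplication operator $y\mapsto My$ is bounded from $\bigl(W^{n}_\infty\bigr)^{m}$ into $\bigl(W^{n-1}_\infty\bigr)^{m}$, with $\|My\|_{n-1,\infty}\le\kappa\,\|M\|_{n-1,\infty}\,\|y\|_{n,\infty}$, where $\kappa$ depends only on $n$ and $m$. This follows by applying the Leibniz formula to the entries of the vector $My$ — all derivatives of order $\le n-1$ of the entries of $M$ lie in $L_\infty$, and all derivatives of order $\le n$ of the components of $y$ lie in $L_\infty$ — together with the fact, already used in the proof of Lemma \ref{Koshi}, that $W^{n-1}_\infty$ is a Banach algebra. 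Taking $M=A(\cdot;\varepsilon)-A(\cdot;0)$ gives $\|L(\varepsilon)-L(0)\|\le\kappa\,\|A(\cdot;\varepsilon)-A(\cdot;0)\|_{n-1,\infty}\to0$ as $\varepsilon\to0+$, which is $(a_1)$.

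The implication $(a_1)\Rightarrow(a_2)$ is immediate, since $\|L(\varepsilon)y-L(0)y\|_{n-1,\infty}\le\|L(\varepsilon)-L(0)\|\,\|y\|_{n,\infty}\to0$ for each fixed $y$. For $(a_2)\Rightarrow(\mathrm I)$ I would test $(a_2)$ on the constant unit vector functions $e_1,\dots,e_m$, which belong to $\bigl(W^{n}_\infty\bigr)^{m}$ on the compact interval $[a,b]$ with all derivatives vanishing; since $e_j'=0$, one has $L(\varepsilon)e_j=A(\cdot;\varepsilon)e_j$, the $j$-th column of $A(\cdot;\varepsilon)$, so $(a_2)$ forces the $j$-th column of $A(\cdot;\varepsilon)$ to converge to that of $A(\cdot;0)$ in $\bigl(W^{n-1}_\infty\bigr)^{m}$ for every $j$, hence $A(\cdot;\varepsilon)\to A(\cdot;0)$ in $\bigl(W^{n-1}_\infty\bigr)^{m\times m}$, which is $(\mathrm I)$. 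The only step requiring genuine work is the operator-norm bound in $(\mathrm I)\Rightarrow(a_1)$; I expect the bookkeeping in the Leibniz/Banach-algebra estimate — in particular noting that the codomain $W^{n-1}_\infty$ only sees derivatives up to order $n-1$, so no control of $y^{(n)}$ is needed — to be the main, though still routine, obstacle.
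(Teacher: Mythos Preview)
Your proposal is correct and follows essentially the same route as the paper: the same cycle $(\mathrm I)\Rightarrow(a_1)\Rightarrow(a_2)\Rightarrow(\mathrm I)$, the same Banach-algebra/Leibniz estimate for $(\mathrm I)\Rightarrow(a_1)$, and the same test in $(a_2)\Rightarrow(\mathrm I)$ (the paper packages your vectors $e_1,\dots,e_m$ as the constant matrix $Y(t)\equiv I_m$ and applies $L(\varepsilon)$ columnwise, which is exactly your argument). Your parenthetical remark that only derivatives of $y$ up to order $n-1$ are needed matches the paper's intermediate bound $\|(A(\varepsilon)-A(0))y\|_{n-1,\infty}\le c_{n-1,\infty}\|A(\varepsilon)-A(0)\|_{n-1,\infty}\|y\|_{n-1,\infty}$ before passing to $\|y\|_{n,\infty}$.
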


\begin{proof} The implication $(a_1) \Rightarrow (a_2)$ is obvious. It remains to show that condition $(a_1)$ follows from the
boundary condition (I) and the boundary condition (I) follows from condition $(a_2)$. We first substantiate the first
implication. Assume that $\|A(\varepsilon)- A(0)\|_{n-1,\infty}\to0$ as $\varepsilon\to0+$. For any vector function $y\in\left(W^{n}_\infty\right)^{m}$, we obtain
\begin{gather*}
\|(L(\varepsilon)-L(0))y\|_{n-1,\infty} = \|(A(\varepsilon)-A(0))y\|_{n-1,\infty} \leq \\
\leq c_{n-1,\infty}\|A(\varepsilon)-A(0)\|_{n-1,\infty}\|y\|_{n-1,\infty}\leq\\ \leq c_{n}\|A(\varepsilon)-A(0)\|_{n-1,\infty}\|y\|_{n,\infty}
\quad\mbox{as}\quad\varepsilon\to0+.
\end{gather*}

Here, $c_{n}$ is a positive number independent of $y$. This number exists because $W^{n}_\infty$ is a Banach algebra. Hence,
\begin{equation*}
\|L(\varepsilon)-L(0)\|\leq c_{n}\|A(\varepsilon)-A(0)\|_{n-1,\infty}\to0
\quad\mbox{as}\quad\varepsilon\to0+,
\end{equation*}

where $\|\cdot\|$ denotes the norm of a linear continuous operator on the pair of spaces
\begin{equation*}
L(\varepsilon) \colon \left(W^{n}_\infty\right)^{m}\rightarrow \left(W^{n-1}_\infty\right)^{m}.
\end{equation*}

The first implication is proved.

We now prove that the boundary condition (I) follows from condition$(a_2)$. Assume that condition $(a_2)$ is
satisfied. Then
$$
Y'+A(\varepsilon)Y=[L(\varepsilon)Y]\to[L(0)Y]=Y'+A(0)Y \quad\mbox{in}\quad
\left(W^{n-1}_\infty\right)^{m\times m}
$$
as $\varepsilon\to0+$ for any matrix function $Y\in\left(W^{n}_\infty\right)^{m\times m}$. Moreover, the matrix function  $[L(\varepsilon)Y]$ is formed by columns obtained are a result of the action of the operator $L(\varepsilon)$ on the corresponding columns of the matrix $Y$. Setting $Y(t)\equiv I_{m}$, we arrive at the required convergence of $A(\varepsilon)\to A(0)$ in $\left(W^{n-1}_\infty\right)^{m\times m}$ as $\varepsilon\to0+$. The second implication and, hence, Lemma \ref{2.l.zb.l} are proved. \end{proof} Theorem \ref{2.dop} is proved.

\section{Proofs of Theorems \ref{3_5} and~\ref{3.6.th-bound}}\label{section5}
\begin{proof}[Proof of Theorem \ref{3_5}]
The \textit{sufficiency} of conditions (0), (I), and (II) under which problem \eqref{equation_e},~\eqref{bound_cond_e} satisfies Definition \ref{defin_3} is established in Theorem \ref{th_sol_conv}. We prove the necessity. Assume that problem \eqref{equation_e},~\eqref{bound_cond_e} satisfies Definition \ref{defin_3}. Then condition (0) is valid. It remains to show that this problem satisfies conditions (I) and (II). We split the proof into three steps:

\emph{Step 1.} We prove that the boundary-value problem \eqref{equation_e}, \eqref{bound_cond_e} satisfies the boundary condition~(I). Under the condition $(\ast)$ of Definition \ref{defin_3}, the operator
\begin{equation*}\label{oper_e}
\bigl(L(\varepsilon),B(\varepsilon)\bigr) \colon \left(W^{n}_\infty\right)^{m}\rightarrow \left(W^{n-1}_\infty\right)^{m}\times\mathbb{C}^{m}
\end{equation*}
is invertible for any $\varepsilon\in[0,\varepsilon_{1})$. For any $\varepsilon\in[0,\varepsilon_{1})$, we consider a matrix boundary-value problem
 $$
 Y'(t;\varepsilon)+A(t;\varepsilon)Y(t;\varepsilon)=O_{m}, \quad t\in (a,b),
 $$
 $$
  [BY(\cdot;\varepsilon)]=I_{m}.
 $$
  This boundary-value problem is a collection of $m$ boundary-value problems \eqref{equation_e}, \eqref{bound_cond_e} with right-hand sides independent of $\varepsilon$. By assumption, this problem is uniquely solvable and its solution $Y(\cdot;\varepsilon)\in\left(W^{n}_\infty\right)^{m\times m}$ satisfies the condition $Y(\cdot;\varepsilon)\to Y(\cdot;0)$ in the space $\left(W^{n}_\infty\right)^{m\times m}$ as $\varepsilon\to0+$. Note that $\det Y(t;\varepsilon)\neq0$ for any $t\in(a,b)$
because otherwise the columns of the matrix function  $Y(\cdot;\varepsilon)$ are linearly dependent, which contradicts the condition $[BY(\cdot;\varepsilon)]=I_{m}$. Hence,
  $$
   A(\cdot;\varepsilon)=-Y'(\cdot;\varepsilon)(Y(\cdot;\varepsilon))^{-1} \rightarrow -Y'(\cdot;0)(Y(\cdot;0))^{-1}=A(\cdot;0)
  $$
  in the space $\left(W^{n}_\infty\right)^{m\times m}$ as $\varepsilon\to0+$, i.e., condition (I) is satisfied.

\emph{Step 2.} We now show that condition (II) is satisfied. First, we prove that $\|B(\varepsilon)\|=O(1)$ as $\varepsilon\to0+$, where $\|\cdot\|$ is the norm of the bounded operator $B(\varepsilon) \colon (W^{n}_\infty)^m\to\mathbb{C}^{m}$. Assume the contrary, i.e., that there exists a number sequence $\left(\varepsilon^{(k)}\right)_{k=1}^{\infty}\subset(0,\varepsilon_{1})$ such that $\varepsilon^{(k)}\to0$ and
 $$
0<\left\|B\bigl(\varepsilon^{(k)}\bigr)\right\|\to\infty, \quad \varepsilon\to0+.
 $$
 For each number $k$, we choose a vector function
$x_{k}\in(W^{n}_\infty)^{m}$ such that
$$
\|x_{k}\|_{n,\infty}=1 \quad \mbox{and} \quad \bigl\|B\bigl(\varepsilon^{(k)}\bigr)x_{k}\bigr\|_{\mathbb{C}^{m}}\geq \frac{1}{2}
\bigl\|B\bigl(\varepsilon^{(k)}\bigr)\bigr\|.
$$

We set
\begin{gather*}
y\bigl(\cdot;\varepsilon^{(k)}\bigr):=
\bigl\|B\bigl(\varepsilon^{(k)}\bigr)\bigr\|^{-1}x_{k},\\
f\bigl(\cdot;\varepsilon^{(k)}\bigr):=
L\bigl(\varepsilon^{(k)}\bigr)\,y\bigl(\cdot;\varepsilon^{(k)}\bigr),\\
c\bigl(\varepsilon^{(k)}\bigr):=B\bigl(\varepsilon^{(k)}\bigr)\,y\bigl(\cdot;\varepsilon^{(k)}\bigr). \end{gather*}

Since $y\left(\cdot;\varepsilon^{(k)}\right)\to0$ in the space $\left(W^{n}_\infty\right)^{m}$ as $\varepsilon\to0+$, we have $f\left(\cdot;\varepsilon^{(k)}\right)\to0$ in $\left(W^{n-1}_\infty\right)^{m}$ because it has
already been shown that $A(\cdot;\varepsilon)$ satisfies condition (I). Since the finite-dimensional space $\mathbb{C}^{m}$ is locally compact, the inequalities $$1/2\leq\left\|c\left(\varepsilon^{(k)}\right)\right\|_{\mathbb{C}^{m}}\leq1$$ are true. Passing to a subsequence of numbers $\varepsilon^{(k)}$, we can assume that $c\left(\varepsilon^{(k)}\right)\to c(0)$ as $k\to\infty$, where $c(0)$ is a nonzero vector in $\mathbb{C}^{m}$. Thus, for any number $k$, the vector function $y\left(\cdot;\varepsilon^{(k)}\right)\in\left(W^{n}_\infty\right)^{m}$ is a unique solution of the boundary-value problem
\begin{gather*}
L\left(\varepsilon^{(k)}\right)\,y\left(t;\varepsilon^{(k)}\right)=f\left(t;\varepsilon^{(k)}\right),  \quad t\in (a,b),\\
B\left(\varepsilon^{(k)}\right)\,y\left(\cdot;\varepsilon^{(k)}\right)=c\left(\varepsilon^{(k)}\right).
\end{gather*}

Recall that $f\left(\cdot;\varepsilon^{(k)}\right)\to0$ in $\left(W^{n-1}_\infty\right)^{m}$ and $c\left(\varepsilon^{(k)}\right)\to c(0)\neq0$ in $\mathbb{C}^{m}$ as $k\to\infty$. Under the condition $(\ast\ast)$ of Definition \ref{defin_3}, the function $y\left(\cdot;\varepsilon^{(k)}\right)$ converges in the space $\left(W^{n}_\infty\right)^{m}$ to the unique solution $y(\cdot;0)$ of the limit boundary-value problem formed by the differential equation $L(0)y(t,0)=0$, $t\in (a,b)$, and the inhomogeneous
boundary condition $B(0)y(\cdot;0)=c(0)$. Since $y\left(\cdot;\varepsilon^{(k)}\right)\to0$ in the same space, we conclude that $y(\cdot;0)\equiv0$, which contradicts the boundary condition. Therefore, this assumption is not true, i.e., $\|B(\varepsilon)\|=O(1)$ as $\varepsilon\to0+$.

\emph{Step 3.} We now show that condition (II) is satisfied. It follows from the result established above that there
exist numbers $\gamma'>0$ and $\varepsilon'\in(0,\varepsilon_{1})$ such that $\|(L(\varepsilon),B(\varepsilon))\|\leq\gamma'$ for all $\varepsilon\in[0,\varepsilon')$, where $\|\cdot\|$ is the norm of a bounded operator acting from the space $\left(W^{n}_\infty\right)^{m}$ into the space $\left(W^{n-1}_\infty\right)^{m}\times\mathbb{C}^{m}$. We arbitrarily choose
a vector function $y\in\left(W^{n}_\infty\right)^{m}$ and set $f(\cdot;\varepsilon):=L(\varepsilon)y$ and  $c(\varepsilon):=B(\varepsilon)y$ for any $\varepsilon\in[0,\varepsilon_{0})$. Hence, in view of condition $(\ast\ast)$, as $\varepsilon\to0+$, we get
\begin{gather*}
\bigl\|B(\varepsilon)y-B(0)y\bigr\|_{\mathbb{C}^{m}}
\leq\bigl\|(f(\cdot;\varepsilon),c(\varepsilon))-
(f(\cdot;0),c(0))\bigr\|_{\left(W^{n-1}_\infty\right)^{m}\times\mathbb{C}^{m}}=\\
=\bigl\|(L(\varepsilon),B(\varepsilon))(L(\varepsilon),B(\varepsilon))^{-1}(f(\cdot;\varepsilon),c(\varepsilon))-
(f(\cdot;0),c(0))\bigr\|_{\left(W^{n-1}_\infty\right)^{m}\times\mathbb{C}^{m}}\leq\\
\leq\gamma'\,\bigl\|(L(\varepsilon),B(\varepsilon))^{-1}
\bigl((f(\cdot;\varepsilon),c(\varepsilon))-
(f(\cdot;0),c(0))\bigr)\bigr\|_{n,\infty}=\\
=\gamma'\,\bigl\|(L(0),B(0))^{-1}(f(\cdot;0),c(0))
-(L(\varepsilon),B(\varepsilon))^{-1}(f(\cdot;0),c(0))\bigr\|_{n,\infty}
\to0.
\end{gather*}

Since $$\|B(\varepsilon)\|=O(1) \quad \mbox{and} \quad \bigl\|B(\varepsilon)y-B(0)y\bigr\|_{\mathbb{C}^{m}} \to0,$$ we conclude that
 $B(\varepsilon)y$ converges to $B(0)y$ in $\mathbb{C}^{m}$ for any $y\in\left(W^{n}_\infty\right)^m$.
Thus, the boundary-value problem \eqref{equation_e}, \eqref{bound_cond_e} satisfies condition (II).

\end{proof}

\begin{proof}[Proof of Theorem~\ref{3.6.th-bound}]
First, we show that the left-hand side of the two-sided inequality \eqref{3.6.bound} holds. We set
\begin{equation}\label{fc}
f(\cdot,\varepsilon):=
L(\varepsilon)\,y(\cdot;\varepsilon),\quad
c(\varepsilon):=B(\varepsilon)\,y(\cdot;\varepsilon).
\end{equation}
The strong convergence of inverse operators
$$
\bigl(L(\varepsilon),B(\varepsilon)\bigr)\xrightarrow{s} \bigl(L(0),B(0)\bigr), \quad \varepsilon\to0+,
$$
follows from the boundary conditions (I) and (II). Hence, there exist numbers $\gamma'>0$ and $\varepsilon \in (0,\varepsilon_2')$ such that the norm of this operator satisfies the inequality
\begin{equation}\label{2.1.4`}
\quad \left\|(L(\varepsilon),B(\varepsilon)) \right\|\leqslant \gamma'.
\end{equation}
Indeed, if we assume the contrary, then we can find a sequence of positive numbers $\bigl(\varepsilon^{(k)}\bigr)_{k=1}^{\infty}$ such that $$\varepsilon^{(k)}\to0 \quad \mbox{and} \quad \bigl\|\bigl(L(\varepsilon^{(k)}),B(\varepsilon^{(k)})\bigr)\bigr\|\rightarrow\infty \quad \mbox{as} \quad k\rightarrow\infty.$$ However, by the Banach–-Steinhaus theorem, this contradicts the fact that $\bigl(L(\varepsilon^{(k)}),B(\varepsilon^{(k)})\bigr)$ strongly converge to $(L(0),B(0))$ as \smash{$k\rightarrow\infty$}. By using \eqref{fc} and \eqref{2.1.4`}, for any $\varepsilon \in (0,\varepsilon_2')$, we conclude that
\begin{gather*}
\bigl\|L(\varepsilon)y(\cdot;0)-f(\cdot;\varepsilon)\bigr\|_{n-1,\infty}+
\bigl\|B(\varepsilon)y(\cdot;0)-c(\varepsilon)\bigr\|_{\mathbb{C}^{m}}=\\
=\bigl\|L(\varepsilon)y(\cdot;0)-L(\varepsilon)y(\cdot;\varepsilon)\bigr\|_{n-1,\infty}+
\bigl\|B(\varepsilon)y(\cdot;0)-B(\varepsilon)y(\cdot;\varepsilon)\bigr\|_{\mathbb{C}^{m}}\leq\\
\leq\bigl\|L(\varepsilon)\bigr\|\bigl\|y(\cdot;0)-y(\cdot;\varepsilon)\bigr\|_{n,\infty}+
\bigl\|B(\varepsilon)\bigr\|\bigl\|y(\cdot;0)-y(\cdot;\varepsilon)\bigr\|_{n,\infty}
\leq\gamma'\bigl\|y(\cdot;0)-y(\cdot;\varepsilon)\bigr\|_{n,\infty}.
\end{gather*}
Thus, we have established the left-hand side of inequality \eqref{3.6.bound} with $\gamma_{1}:=1/\gamma'$.

We now prove the right-hand side of the two-sided inequality \eqref{3.6.bound}. By Theorem \ref{3_5}, the operator $(L(\varepsilon),B(\varepsilon))$ has a bounded inverse operator $(L(\varepsilon),B(\varepsilon))^{-1}$ for any $\varepsilon \in (0,\varepsilon_2')$. Moreover, we have the following strong convergence:
$$
(L(\varepsilon),B(\varepsilon))^{-1}\xrightarrow{s} (L(0),B(0))^{-1}, \quad \varepsilon\to0+.
$$
Indeed, for any $f \in \bigl(W^{n-1}_\infty\bigr)^{m}$ and $c \in \mathbb{C}^{m}$,  under the condition $(\ast\ast)$ of Definition \ref{defin_3}, we get the following convergence:
$$
\bigl(L(\varepsilon),B(\varepsilon)\bigr)^{-1}(f;c)=:y(\cdot;\varepsilon)\rightarrow y(\cdot;0):= \bigl(L(0),B(0)\bigr)^{-1}(f;c)
$$
in $\bigl(W^{n}_\infty\bigr)^{m}$ as $\varepsilon\to0+$. As above, by the Banach–-Steinhaus theorem, the norms of these inverse operators are bounded, i.e., there exist positive numbers $\varepsilon_2$ and $\gamma_{2}$ such that the norm of the inverse operator
$$
\quad \bigl\|\bigl(L(\varepsilon),B(\varepsilon)\bigr)^{-1} \bigr\|\leqslant\gamma_{2}.
$$
Thus, for any $\varepsilon \in (0,\varepsilon_2)$, the relations
\begin{gather*}
\bigl\|y(\cdot;0)-y(\cdot;\varepsilon)\bigr\|_{n,\infty}
=\bigl\|\bigl(L(\varepsilon),B(\varepsilon)\bigr)^{-1}\bigl(L(\varepsilon),B(\varepsilon)\bigr)\bigl(y(\cdot;0)-
y(\cdot;\varepsilon)\bigr)\bigr\|_{n,\infty}\leq\\
\leq\gamma_{2}\bigl(\,\|L(\varepsilon)y(\cdot;0)-f(\cdot;\varepsilon)\|_{n-1,\infty}+
\|B(\varepsilon)y(\cdot;0)-c(\varepsilon)\|_{\mathbb{C}^{m}}\bigr)
\end{gather*}
are true. This directly yields the right-hand side of the two-sided estimate~\eqref{3.6.bound}.

\end{proof}

\end{document}